\newcommand{\QQ}{\mathbb{Q}}
\newcommand{\NN}{\mathbb{N}}
\newcommand{\RR}{\mathbb{R}}
\newcommand{\BB}{\mathbb{B}}
\newcommand{\ZZ}{\mathbb{Z}}
\newcommand{\EE}{\mathbb{E}}
\newcommand{\AAA}{\mathcal{A}}
\newcommand{\DD}{\mathcal{D}}
\newcommand{\MM}{\mathcal{M}}
\newcommand{\PP}{\mathbb{P}}
\newcommand{\JJ}{\mathbb{J}}
\newcommand{\II}{\mathcal{I}}
\newcommand{\HH}{\mathcal{H}}
\newcommand{\NMM}{\mathcal{M}^*}
\newcommand{\similar}{similar }
\newcommand{\partition}{\gamma}
\newcommand{\Lambdap}{\Lambda_p}
\newcommand{\RightShift}{{\boldsymbol{\sigma}_{-}}}
\newcommand{\bpn}{)_{p^n}}
\newcommand{\bp}{)_{p}}
\newcommand{\Shift}{\boldsymbol{\sigma}}
\newcommand{\deriv}{\varphi}
\newcommand{\restrict}{\big |}
\newcommand{\DR}{\ZZ \left[ \frac{1}{p} \right] }
\newcommand{\twopartdef}[4]
{
        \left\{
                \begin{array}{ll}
                        #1 & \mbox{if } #2 \\
                        #3 & \mbox{if } #4
                \end{array}
        \right.
}
\newcommand{\twopartdefotherwise}[3]
{
        \left\{
                \begin{array}{ll}
                        #1 & \mbox{if } #2 \\
                        #3 & \mbox{otherwise }
                \end{array}
        \right.
}
\newcommand{\fullSeq}[1]{\boldsymbol{#1}}
\newcommand{\minusSeq}[1]{\boldsymbol{#1}_{-}}
\declaretheorem[numberwithin=section]{theorem}
\declaretheorem[sibling=theorem]{lemma}
\declaretheorem[sibling=theorem]{definition}
\declaretheorem[sibling=theorem]{proposition}
\declaretheorem[sibling=theorem]{remark}
\declaretheorem[sibling=theorem]{corollary}
\newcommand {\PlainXZ}{X_{\ZZ}}
\newcommand {\PlainXZtilde}{\tilde{X}_{\ZZ}}
\newcommand{\Mext}{M_p}
\newcommand{\Pext}{\tilde{Q}_{\ZZ}}
\newcommand{\Pchain}{Q_{\ZZ}}
\newcommand{\Pminus}{Q}
\newcommand{\Ptilde}{\tilde{Q}}
\newcommand{\PM}{\hat{\nu}}
\numberwithin{equation}{section}
\begin{document}

\title{Spatial Recurrence for Ergodic Fractal Measures}
\author{Nadav Dym }
\affil{Hebrew University of Jerusalem\\ (email: nadav.dym@weizmann.ac.il)}
\renewcommand\Affilfont{\itshape\small}
\date{}
\maketitle
\begin{abstract}
We discuss an invertible version of Furstenberg's `Ergodic CP Shift Systems'.  We show that  the explicit regularity of these dynamical systems with respect to magnification of measures, implies certain regularity with respect to translation of measures; We show that the translation action on measures is non-singular, and prove pointwise discrete and continuous ergodic theorems for the translation action.

\end{abstract}

\unmarkedfntext{This research was supported by ERC grant 306494.}
\section{Introduction}
CP Distributions were  introduced by Furstenberg in \cite{furst} as a tool in his proof of dimension conservation for homogeneous fractals, though they also appear in slightly different form in earlier work \cite{furstAncient,furstANDweiss}.
CP\ distributions are probability measures $Q$ defined on a set of points of the form $(\mu,x)$, which are invariant with respect to an operation of `magnifying' $\mu$ around a neighborhood of $x$. When the distribution is also ergodic with respect to the magnifying operation, we say that it is an Ergodic CP Distribution (ECPD).

CP distributions and their equivalents have played an important role in many recent works on fractal geometry \cite{furst,local,equidistribution,KSS,dimension}. They often arise in explicit constructions, like self-similar measures and random fractal constructions, but there are also many examples in which CP distributions arise implicitly, for instance by a limiting process in which we start with an arbitrary measure and `zoom in' to a point, or as the `slice' measures (conditional measures on affine subspaces) of higher-dimensional CP distributions (see \cite{furst,hochman}). Because an explicit description of the distribution is often not available, it becomes important to better understand the properties of CP distributions in general.

 In this paper we discuss Extended ECPD which are an invertible version of ECPD (see \cite{hochman}). Our goal is to show that the explicit requirements of regularity with respect to the magnification operation on measures, which appear in the definition of Extended ECPD, in fact imply certain regularity also with respect to a `translation and normalization' operation on measures. We exclude from our discussion  Extended ECPD which are trivial in the sense that typical measures are Dirac measures. Similar questions were addressed by Host in \cite{host}, where he proves that a `non deterministic' measure on $[0,1)$ invariant and ergodic under the  $\times p $ map (which we think of as a magnification map in this context) is conservative with respect to the action of translation by numbers whose base $p$ representation is finite, and by Medynets and Solomyak \cite{solomyak} who proved a second-order ergodic theorem for the action of translation by $\RR^d$ on a different class of  `fractal dynamical systems' (self-similar tiling systems).

Using similar methodology to Host's, we prove that the translation and normalization action on measures is conservative, and use this conservativity to define a modified translation and normalization action which is non-singular. This in turn enables the application of the ergodic theorem for non-singular transformations, which shows that the discrete ergodic averages
$$\frac{1}{\nu[0,N)}\sum_{n=0}^{N-1} \nu[n,n+1) \cdot f(t_n^*\nu)  $$
and the continuous ergodic averages
$$\frac{1}{\nu[0,T)}\int_0^T f(t_x^*\nu)d\nu(x) $$
(where $t_x^*\nu $ is the measure $\nu $ translated by $x$ and then normalized) converge for typical $\nu$.

We now begin Subsection~\ref{sub:term} in which we present some terminology we use throughout the paper. We then define Extended ECPD in Subsection~\ref{sub:extended} , which will enable us to give a complete presentation of our main results in Subsection~\ref{sub:MainResults}. We will then give the  outline of the rest of the paper.

%

\textbf{Acknowledgments}
This paper is an adaptation of the  Master's thesis of the author submitted to the Hebrew University of Jerusalem.  I would like to thank my advisor  Michael Hochman for providing me with interesting questions to work on, for many helpful discussions and suggestions, and in general, for his substantial help in making the work on this project a pleasant experience.

\pagebreak
\textbf{List of Notation}
\begin{longtable}[l]{p{100pt} p{300pt} }
 $\MM(\RR)$      & Radon positive measures on $\RR$.  \\
 $N$         &   Map normalizing elements of $\MM(\RR)$.        \\
  $\NMM$         & Space of normalized measures.         \\
  $\PP_0[0,1)$ & Space of probability measures on $[0,1)$, and zero.\\
  $\Lambdap$   & The set $\{0,1,\ldots,p-1 \} $. \\
  $\fullSeq{i},\fullSeq{\mu} $ & Sequences of the form $\fullSeq{i}=(i_n)_{n \in \ZZ}$ and $\fullSeq{\mu}=(\mu_n)_{n \in \ZZ} $.\\
  $\minusSeq{i},\minusSeq{\mu} $ & Sequences of the form $\minusSeq{i}=(i_n)_{n \leq 0}$ and $\minusSeq{\mu}=(\mu_n)_{n \leq 0} $.\\
  $\PlainXZ, \PlainXZtilde $ & Spaces with points of the form $(\fullSeq{\mu},\fullSeq{i}) $, $(\nu,\fullSeq{i}). $\\
  $X, \tilde{X} $ & Spaces with points of the form $(\minusSeq{\mu},\minusSeq{i}) $, $(\nu,\minusSeq{i}). $\\
  $\Mext $ & Zooming in map on Extended ECPS.\\
  $S_a $, $T_k $ & Translation maps on $X$ and $\tilde{X}$.\\
  $\Phi$, $\theta$ & Isomorphisms $\Phi:\PlainXZ \to \PlainXZtilde $ and $\theta:X \to \tilde{X} $.  \\
  $\hat{\nu}$,$\tilde{\pi}_{\MM} $, $\hat{\mu}_n$ & Projections to the measure coordinate.\\
  $\hat{i}_n $ & Projection to the symbolic coordinate.\\
  $t_x$ & Translation by $x$ on $\RR $, or the induce map on $\MM(\RR) $.\\
  $t_x^* $ & Translation and normalization map on $\NMM $.\\
  $\Shift $,$\RightShift $ & Left shift, right shift.\\
  $E_j,E_j^{-} $ & Sequences $\fullSeq{i}$ or $\minusSeq{i}$ with $i_n=j $ for all negative enough $n$.    \\
  $D_{p^n} $, $D$ & Points of the form $kp^{-n}$, and the union of $D_{p^n} $.\\
  $G_n,G $ & Points of the form $(\ldots,0,0,i_n,\ldots,i_{-1},i_0)$, and the union of $G_n$.\\
  $[i_1,\ldots,i_n \bpn$ & The interval $[\sum_{j=1}^n i_j p^{-j},\sum_{j=1}^n i_j p^{-j}+p^{-n}) $.
  \end{longtable} 
\subsection{Terminology} \label{sub:term}
\begin{enumerate}
\item We  denote sequences $(x_n)_{n \in \ZZ} $ by $\fullSeq{x}$, and sequences $(x_n)_{n \leq 0} $ by $\minusSeq{x}$. We use $x_j^l $ as a shortened notation for the subsequence $(x_j,x_{j+1}, \ldots, x_{l}) $.
\item For every space of the form $X^{\ZZ}$, $\Shift$ will denote the left-shift operator, defined by $$(\Shift(\fullSeq{x}))_n=x_{n+1}$$
and $ \Shift_{-} $ will denote
the right-shift operator on
$ X^{\ZZ} $ or $X^{ \ZZ_{-} }$ defined by
$$(\Shift_{-}(\fullSeq{x}))_n=x_{n-1} \text{ or } (\Shift_{-}(\minusSeq{x}))_n=x_{n-1}   $$
\item All the spaces we discuss are separable metric spaces, and we always take the Borel $\sigma$-algebra on these spaces, which  we will denote  by $\BB$.
\item Recall that if $(X,\BB,\mu)$ is a measure space,  $(Y,\AAA) $ is a measurable space, and $\rho:X \rightarrow Y$ is a measurable function, then \emph{ the pushforward of $\mu$ by $\rho$ } is the measure $\nu$ on $Y$ defined by
$$\int f(y) d\nu(y)=\int f \circ \rho (x) d\mu(x) $$
We will denote this measure by $\rho \mu $ or $d\rho \mu  $.

If $g:X \to X $ is a non-negative measurable function, then the \emph{multiplication of $\mu$ by $g$}, is the measure $\nu $ on $X$, defined by
$$\int f(x)d\nu(x)=\int f(x) \cdot g(x) d\mu(x) $$
We denote this measure by $gd\mu $.
\item If $(X,\BB,\mu)$ is a probability space, $\AAA \subseteq \BB $ is a sub-$\sigma$-algebra, and $B \in \BB$, then $\PP_{\mu}\left(B|\AAA  \right) $ is the function $\EE_{\mu}\left(1_B| \AAA   \right) $.

For measurable functions $Z_1,\ldots,Z_d:(X,\BB) \to (Y,\AAA) $,  we define
$$\PP_{\mu}\left( B |Z_1^d    \right)=\PP_{\mu}\left( B| \sigma(Z_1^{-1}\AAA,\ldots,Z_d^{-1}\AAA) \right) $$
\item We will usually denote measures on $\RR $ by $\nu $, and measures supported in $[0,1) $ by $\mu $.  To avoid confusion, we call measures on a space of measures distributions, and denote them by $P,Q $ etc.

\end{enumerate}
\subsection{Extended ECPS}\label{sub:extended}
In the following we will define Extended ECPS (ergodic CP systems). Our notation and definitions resemble Hochman's \cite{hochman}, but are not identical. A comparison is presented in Appendix~\ref{Ap:Notation}.

The measures we will perform our `zooming in' on will be members of $\MM(\RR),$  the space of positive Radon measures on $\RR$ endowed  with the weak topology (this is a metrizable, separable topology in which $\mu_n \rightarrow \mu$ if and only if for every $f \in C_c(\RR)$, $\int f d\mu_n \rightarrow \int f d\mu $. For more details see \cite{mattila}). The following terminology will be helpful for the construction we will soon describe:
\begin{definition}
The restriction of $\nu\in \MM(\RR)$ to a Borel set $A \subseteq \RR$ will be the measure $1_A d\nu \in \MM(\RR)$.
\end{definition}
\begin{definition}
$\rho:\RR \to \RR$ is an orientation preserving homothety if $\rho(x)=ax+b $ where $a>0,b\in \RR$.
\end{definition}
\begin{definition}
We say that $\nu_1$ and $\nu_2$ are similar, if there is an orientation preserving homothety $\rho$ and $\lambda>0$ such that $d\nu_1=\lambda d\rho \nu_2$.  
\end{definition}

 \textbf{Normalization of Measures. } Normally when defining Extended ECPS, the discussion is restricted to measures $\mu$ which give $[0,1) $ positive measure, and these measures are assumed to be normalized so that $\mu[0,1)=1$. The structure of Extended ECPS then promises that `zooming in' to $\mu $ gives rise to a new measure which also gives $[0,1) $ positive mass, and hence this new measure can also be normalized so that its restriction to $[0,1) $ is a probability measure.
  For our purposes here, however, it will be useful to extend this normalization of measures also to measures $\mu $ with $\mu[0,1)=0 $, since once we allow typical measures of Extended ECPS to be translated, it is no longer guaranteed that the translated measure will be normalizable in the usual sense. We thus define a normalizing map in the following way
\begin{definition}
For every Radon measure $\nu \neq 0$ on $\RR$, we define
$$\psi(\nu)=\min\{n\in \NN:\nu[-(n-1),n)>0\} \quad (\NN=1,2,\ldots) $$
For every $\nu \in \MM(\RR)$ we define
$$N \nu=\twopartdef{\frac{\nu}{\nu[-(\psi(\nu)-1),\psi(\nu))}}{\nu \neq 0}{0}{\nu=0}$$
\end{definition}
We note that if $\mu[0,1)>0 $, this normalization coincides with the standard normalization described above.  This normalization does not seem very natural, and indeed it is not unique in the sense that different extensions of the standard normalization which fulfill the conditions of Remark~\ref{rem:Norm} can be used to achieve the same results which we describe below. However while we use this normalization for proving our results, they can in fact be stated in a way which is independent of the normalization, as we will discuss at the end of the introduction.

Having defined the normalizing map, we will now restrict our discussion to the space of normalized measures $\NMM $, which is just the image of $\MM(\RR)$ under $N$.

Now fix $p \in \NN \setminus \{1\} $, and denote $\Lambdap=\{0,1,\ldots,p-1 \} $.
The interval $[0,1)$ can be divided into $p$ intervals of the form $[\frac{i}{p},\frac{i+1}{p}) $. To simplify notation, for $i \in \Lambdap$ we will write $[i \bp$ instead of  $[\frac{i}{p},\frac{i+1}{p}) $. Similarly, for any $n \in \NN$, $[0,1) $ can be divided into $p^n$ intervals of equal length, and for $(i_1,i_2, \ldots, i_n) \in \Lambdap^n$ we write
$$[i_1,i_2, \ldots,i_n \bpn=\bigg[\sum_{k=1}^{n}\frac{i_k}{p^k},\sum_{k=1}^{n}\frac{i_k}{p^k}+\frac{1}{p^n} \bigg) $$

We also define $\rho_i$ to be the orientation preserving homothety that takes $[i \bp $ to $[0,1)$, i.e.
$$\rho_i(x)=px-i$$

 We now  consider the space $\NMM \times \Lambdap^{\ZZ}$ obtained by adding indices to $\NMM$. On this space we define the projection maps
$$\hat{\nu}(\nu,\fullSeq{i})=\nu $$
$$\hat{i}_n(\nu,\fullSeq{i})=i_n$$

We can now define our `zooming in' map
\begin{definition}
The `zooming in' map  $\Mext:\NMM \times \Lambdap^{\ZZ} \to \NMM \times \Lambdap^{\ZZ} $ will be the (invertible) map
$$\Mext(\nu,\fullSeq{i})=(N\rho_{i_1}\nu,\Shift(\fullSeq{i}))$$
\end{definition}

Note that the restriction of $ N\rho_{i_1}\nu$ to $[0,1)$ is a probability measure (or zero) which is \similar to the restriction of $\nu$ to $[i_1)_{(p)}$. More generally, the measure component of $(\Mext)^n(\nu,\fullSeq{i}) $ will be a measure whose restriction to $[0,1)$ is a probability measure (or zero) which is \similar to the restriction of $\nu$ to $[i_1,i_2,\ldots,i_n \bpn $.

We can now define an Extended ECPS 
\vspace{0.5 cm}
\begin{definition}
A probability distribution $\Pext$ on $(\NMM \times \Lambdap^{\ZZ},\BB)$ is called an \emph{adapted distribution}, if for every $j \in \Lambdap $ and $\Pext$ almost every $(\nu,\fullSeq{i}) $,
$$\PP_{\Pext}\left(\hat{i}_1=j|(\hat{\nu},(\hat{i}_n)_{n \leq 0})\right)(\nu,\fullSeq{i})=\nu[j \bp $$
\end{definition}
\vspace{0.5 cm}
In other words, the probability of `zooming in' to $\nu \restrict_{[j\bp}$ is exactly $\nu [j \bp$. Summing over $j=0,\ldots,p-1$ shows that when $\Pext $ is an adapted distributions, a.e. $\nu$ has $\nu[0,1)=1 $.
\vspace{0.5 cm}
\begin{definition}
A probability distribution $\Pext$ on $(\NMM \times \Lambdap^{\ZZ},\BB)$ is called an \emph{Extended Ergodic CP Distribution} (Extended ECPD) if it is invariant and ergodic with respect to $\Mext$, and it is adapted.\\ If $\Pext $ is an Extended ECPD,
$(\NMM \times \Lambdap^{\ZZ},\Mext,\Pext)$ is called an \emph{Extended Ergodic CP system} (Extended ECPS).
\end{definition}
\paragraph{Non-Deterministic Distributions}
It will be convenient to assume that the Extended ECPD we discuss are non-deterministic:
\begin{definition}  We say that a distribution $\Pext $ on $\NMM \times \Lambdap^{\ZZ} $ is deterministic, if for $\Pext$ almost every $(\nu,\fullSeq{i}) $,
\begin{equation}\label{eq:determinism}
\PP_{\Pext}\left( \hat{i}_1=i_1|(\hat{\nu},(\hat{i}_n)_{n \leq 0})\right)\left(\nu,\fullSeq{i}\right)=1
\end{equation}
Otherwise we say that the distribution is non-deterministic.
\end{definition}
If an Extended ECPD is deterministic then it follows from the $\Mext$ invariance of   $\Pext$ that a.e. measure $\nu$ is a Dirac measure supported on a point in $[0,1)$. We restrict our discussion to the non-degenerate case of non-deterministic distributions.

 If $\Pext$ is a non-deterministic Extended ECPD, then for every $j \in \Lambdap$ and almost every $(\nu,\fullSeq{i})$, the index sequence $\fullSeq{i}$ isn't in the set
$$E_j=\{\fullSeq{i}: \; i_n=j \text{ for all negative enough } n \} $$
 This again is due to $\Mext$ invariance. Thus we can consider non-deterministic Extended ECPD to be defined only on the $\Mext$ invariant set
$$\PlainXZtilde=\{(\nu,\fullSeq{i}): \; \; \nu \in \NMM, \text{ and  } \forall j\in \Lambdap, \; \fullSeq{i} \not \in E_j \} $$

\subsection{ Results}\label{sub:MainResults}

As we discussed earlier, we are interested in the behavior of the measure component of Extended ECPS under translations. We will now define this more precisely:
\begin{definition}
For every $x \in \RR$, let $t_x:\RR \to \RR$ be the map
$$t_x(y)=y-x$$
$t_x \nu $ will be the pushforward of $\nu$ by $t_x$. We also define $t^*_x \nu=Nt_x\nu$.
\end{definition}
The maps $\{ t^*_x\}_{x \in \RR}$ define a translation-normalization action of $\RR$ on $\NMM$, and the pushforward of an extended ECPD $\Pext$ by the projection $\PM $ induces a distribution $\PM \Pext $ on $\NMM$.
\begin{definition}\label{def:cons}
Let $G $ be a group which acts measurably on a Borel probability space $(\Omega, \BB, \mu )$, where $\Omega$ is a separable metric space.
\begin{enumerate}
\item We say that the action of $G$ on $\Omega$ is conservative with respect to $\mu$, if for every $A \in \BB $ with $\mu(A)>0 $, there is a $g \in G \setminus \{ 1_G \} $, such that $\mu(A \cap gA)>0 $.
\item We say that the action of $G$ on $\Omega$ is strictly singular with respect to $\mu$, if for all $g \in G \setminus \{ 1_G \} $, $g\mu \perp \mu $.
\item We say that the action of $G$ on $\Omega$ is recurrent with respect to $\mu$, if for $\mu$ almost every $x \in \Omega$, there is a sequence $(g_n)_{n \in \NN} \subseteq G \setminus \{1_G \} $, such that $g_nx \rightarrow x $.
\end{enumerate}
\end{definition}
Conservativity implies recurrence, and if $G$ is countable, then conservativity also implies that the action of $G$ on $\Omega$ isn't strictly singular with respect to $\mu$. However, recurrence, or failing to be strictly singular, does not necessarily imply conservativity. For example, the action of $\QQ$ on $(\RR,\BB,\frac{1}{2}(\delta_0+\delta_1))$ by addition is recurrent, is not strictly singular, and yet is not conservative.

We can now state our results.
\begin{theorem} \label{th:conservative}
If $\Pext$ is a non-deterministic ECPD, then the translation-normalization action of $\ZZ$ on $\NMM$ is conservative with respect to $\PM \Pext $.

\end{theorem}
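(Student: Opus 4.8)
The plan is to follow Host's approach as advertised in the introduction. The key is to exploit the adaptedness condition, which says that conditioning on the past and the current measure, the probability of zooming into the sub-interval $[j)_{(p)}$ is exactly $\nu[j)_{(p)}$. The upshot is that the measure coordinate, together with the symbolic past, forms a very rigid object: the finitely-supported base-$p$ translations relate the distribution $\PM\Pext$ to itself in a controlled way.

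First I would set up the right machinery for translations by integers. Since $\ZZ$ acts via $t_n^* = N t_n$ and normalization only rescales, conservativity of the $\ZZ$-action on $(\NMM, \PM\Pext)$ is really a statement about whether, for a positive-measure set $A$ of measures, one can find $n \neq 0$ with $\PM\Pext(A \cap t_n^* A) > 0$. The natural idea is to pass through the dyadic-type structure: a typical $\nu$ gives mass $1$ to $[0,1)$, and the magnification dynamics $\Mext^n$ relates $\nu\restrict_{[0,1)}$ to $\nu\restrict_{[i_1,\dots,i_n)_{p^n}}$ up to similarity. So I would look at the symbolic coordinate $\fullSeq{i}$ and the induced action on the space $\PlainXZtilde$, and try to transfer the conservativity question upstairs to the full system $(\PlainXZtilde, \Mext, \Pext)$, where I have ergodicity to work with.

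The central step, I expect, is a Borel–Cantelli / second-moment or a conditional-measure argument showing that the symbolic sequence $\fullSeq{i}$ returns, infinitely often, to configurations that force an honest translation relation on the measure. Concretely: because the distribution is non-deterministic, for a.e. point the conditional probability $\PP_{\Pext}(\hat i_1 = i_1 \mid \hat\nu, (\hat i_n)_{n\le 0})$ is strictly less than $1$ on a positive-measure set, so with positive conditional probability the zoom-in goes into a different child than the `deterministic' one; iterating and using the invariance of $\Pext$, one gets that along the orbit the address sequences $(i_1,\dots,i_n)$ visited are `spread out' enough that two of the corresponding magnified copies of $\nu$ sit inside $[0,1)$ at a bounded base-$p$ distance from each other. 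Translating one onto the other (an integer translation after rescaling back) then gives the needed overlap $A \cap t_n^* A$. Making this quantitative — controlling the normalization constants $\psi(\cdot)$ so that the translation amount is genuinely a nonzero integer and stays in a set of positive measure — is the technical heart.

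The main obstacle, I anticipate, is precisely bookkeeping the normalization map $N$ and the function $\psi$: translating $\nu$ by an integer and renormalizing can move mass in and out of the reference window $[-(\psi-1),\psi)$ in a way that is not obviously measure-preserving or even non-singular a priori (indeed Theorem~\ref{th:conservative} is the first step toward fixing that). So one cannot simply quote a measure-preserving relation; instead I would argue at the level of the ECPD on $\PlainXZtilde$, where $\Mext$ genuinely preserves $\Pext$, establish a recurrence/overlap statement there using ergodicity and non-determinism, and only at the end push forward by $\PM$ to conclude conservativity of the $\ZZ$-action on $\NMM$, absorbing the normalization factors into the positivity of the relevant conditional expectations rather than trying to track them exactly.
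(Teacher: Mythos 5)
Your overall reduction strategy is the right one and matches the paper's: work upstream of the normalization, on a system where the magnification dynamics genuinely preserves the distribution, and only at the end push conservativity down to $\NMM$ (this is exactly Proposition~\ref{prop:conclusion}, via the one-sided factors $X$, $\tilde{X}$ and the maps $\theta$, $\tilde{\pi}_{\MM}$). But the central step as you describe it has a genuine gap. Conservativity demands that for \emph{every} positive-measure set $A$ of measures there exist $n\neq 0$ with $\PM\Pext(A\cap t_n^*A)>0$. Your proposed mechanism --- non-determinism spreads out the address sequence, so two magnified copies of $\nu$ sit inside $[0,1)$ at bounded base-$p$ distance, and translating one onto the other gives the overlap --- produces at best a translation relation between two different pieces (rescaled restrictions) of a single typical $\nu$; it does not produce a nonzero integer $n$ for which the translated-and-normalized measure $t_n^*\nu$ lands in the same \emph{arbitrary} set $A$ that $\nu$ lies in. Self-similarity among pieces of one measure and recurrence of a given set $A$ under the $\ZZ$-action are different statements, and no Borel--Cantelli or second-moment estimate is actually identified that would bridge them.

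What carries the paper's proof is Host's criterion for increasing finite groups of transformations: one takes the groups $G_n$ of digit sequences supported on coordinates $\geq n$, acting on the one-sided space $X$ by changing finitely many past digits and recomputing the measure coordinates (the maps $S_a$), and shows (Lemma~\ref{lem:host}) that this action is conservative if and only if $\phi_n=dQ/d\bigl(\sum_{a\in G_n}S_aQ\bigr)\to 0$ almost everywhere. The cylinder partition on the digits $i_n,\dots,i_0$ identifies $\phi_n$ with the conditional probability of those digits given the remote past; shift-invariance and adaptedness factor it as $\prod_{j=n}^{0}\phi_0\circ\Shift_{-}^{|j|}$, and the Birkhoff ergodic theorem applied to $-\log\phi_0$, whose integral is strictly positive precisely because $Q$ is non-deterministic, forces $\phi_n\to 0$ at an exponential rate. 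This Radon--Nikodym, entropy-type mechanism --- which your sketch defers as ``the technical heart'' --- is exactly the missing idea; it is also what lets the argument handle an arbitrary positive-measure set $A$ (via the wandering-set characterization in Lemma~\ref{lem:host}) rather than a single typical measure. A final, smaller point: the transfer from the $G$-action on $X$ to the $\ZZ$-action on $\NMM$ needs the dictionary of Lemma~\ref{lem:relation} (each finite digit change corresponds to an integer translation and vice versa, with $a=0$ iff $k=0$), which your plan assumes implicitly but never supplies.
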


As conservativity implies recurrence, one can immediately conclude
\begin{corollary}\label{cor:recurrence}
 If $\Pext$ is a non-deterministic ECPD, then for almost every $\nu$, there is a sequence $k_n \in \ZZ$ such that $t^*_{k_n}\nu \rightarrow \nu$ in the weak topology.
\end{corollary}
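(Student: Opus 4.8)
The plan is to obtain Corollary~\ref{cor:recurrence} directly from Theorem~\ref{th:conservative}, using the general implication ``conservativity $\Rightarrow$ recurrence'' noted right after Definition~\ref{def:cons}; so the only real work is to check that the hypotheses of that implication are met here and then to spell the implication out. First I would record that $\{t^*_k\}_{k\in\ZZ}$ is a genuine measurable action of $\ZZ$ on the separable metric space $\NMM$: translating a nonzero Radon measure keeps it nonzero, and $N$ is scale-invariant and idempotent, so $t^*_x\circ t^*_y=t^*_{x+y}$ and $t^*_0=\mathrm{id}$ on $\NMM$, while each $t^*_k$ is a Borel automorphism of $\NMM$. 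Moreover, since $\Pext$ is adapted, $\mu:=\PM\Pext$ does not charge $0\in\NMM$, so $\mu$ lives on $\NMM\setminus\{0\}$ where the action is non-degenerate. With these remarks in place, Theorem~\ref{th:conservative} says precisely that the $\ZZ$-action $\{t^*_k\}$ is conservative with respect to $\mu$, and it remains to deduce recurrence.

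For the deduction, fix a countable basis $\{U_m\}_{m\in\NN}$ for the weak topology on $\NMM$ (available by second countability) and, for each $m$, put
$$W_m=\{\nu\in U_m:\ t^*_k\nu\notin U_m\ \text{for all}\ k\in\ZZ\setminus\{0\}\},$$
the Borel set of points of $U_m$ that never return to $U_m$ under a nonzero translation. I claim $\mu(W_m)=0$: otherwise conservativity produces $k\neq0$ and a point $\nu'\in W_m\cap t^*_kW_m$; writing $\nu'=t^*_k\nu$ with $\nu\in W_m$ gives $t^*_k\nu=\nu'\in W_m\subseteq U_m$ with $k\neq0$, contradicting the definition of $W_m$. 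Hence $\mu\big(\bigcup_mW_m\big)=0$. For every $\nu$ outside this null set, take a decreasing sequence of open neighborhoods $V_n$ forming a basis at $\nu$ (e.g.\ metric balls of radius $1/n$); each $V_n$ contains some basis element $U_{m_n}\ni\nu$, and since $\nu\notin W_{m_n}$ there is $k_n\neq0$ with $t^*_{k_n}\nu\in U_{m_n}\subseteq V_n$; thus $t^*_{k_n}\nu\to\nu$ in the weak topology, which is exactly the assertion of the corollary.

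I do not expect a substantive obstacle: the mathematical content sits entirely in Theorem~\ref{th:conservative}, and what remains is the textbook ``conservative $\Rightarrow$ recurrent'' argument above together with routine checks. The points that need a little attention are (i) confirming the group-action and Borel-measurability properties of $\{t^*_k\}$, so that Definition~\ref{def:cons} applies verbatim; (ii) using adaptedness of $\Pext$ to know $\mu(\{0\})=0$, since at $0$ every $t^*_k$ fixes the point and the statement would be vacuous; and (iii) second countability of the weak topology on $\NMM$, which legitimizes both the countable basis $\{U_m\}$ and the countable neighborhood basis at each $\nu$. All three are immediate from the construction in Subsection~\ref{sub:extended}.
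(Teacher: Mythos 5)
Your proposal is correct and matches the paper's route: the paper deduces the corollary immediately from Theorem~\ref{th:conservative} via the general fact, stated right after Definition~\ref{def:cons}, that conservativity implies recurrence. You simply spell out that textbook implication (countable basis, non-returning sets are null) together with the routine checks that $\{t^*_k\}_{k\in\ZZ}$ is a Borel $\ZZ$-action, which the paper leaves implicit.
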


Using the fact that the translation action is conservative, we prove that
$$\tau(\nu)=\min\{n \in \NN: \; t_n\nu [0,1)>0\}$$
is finite for a.e. $\nu$, and thus the map
\begin{equation}\label{eq:inducedMap}
t(\nu)= t_{\tau(\nu)}^*\nu
\end{equation}
is well defined. In fact we show that this map is non-singular, and we can thus apply the ergodic theorem for non-singular transformations to obtain
 discrete and continuous Pointwise  Ergodic Theorems for non-deterministic ECPD:
\begin{theorem}\label{th:DiscreteErgodic}
If $\Pext$ is a non-deterministic ECPD, then for every $f \in L^1(\NMM,\PM\Pext)$ and $\PM\Pext$ almost every $\nu$ we have
$$\lim_{N} \frac{1}{\nu([0,N))}\sum_{n=0}^{N-1} \nu[n,n+1) \cdot f(t_n^*\nu) =\EE_{\PM\Pext}(f|\JJ)(\nu)$$
\end{theorem}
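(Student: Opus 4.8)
The strategy is to realize the averages in Theorem~\ref{th:DiscreteErgodic} as the ratio‑ergodic averages of a single non‑singular transformation: the first‑return map of the $\ZZ$‑translation action to the set where the unit interval carries positive mass. I first use Theorem~\ref{th:conservative} to show that $\tau(\nu)<\infty$ for $\PM\Pext$‑a.e.\ $\nu$. As $\Pext$ is adapted, $\nu[0,1)=1$ a.e., so $\{\tau=\infty\}$ coincides a.e.\ with $B:=\{\nu:\nu[0,1)>0,\ \nu[1,\infty)=0\}$. If $\nu\in B$, then translating left by any $k\ge 1$ pushes all the mass into $(-\infty,1-k)$, so $(t_k\nu)[0,1)=\nu[k,k+1)=0$, hence $t_k^*\nu$ carries no mass on $[0,1)$ and leaves $B$; translating right by any $k\ge 1$ creates positive mass on $[k,k+1)\subseteq[1,\infty)$ (the image of $\nu[0,1)$), so $t_{-k}^*\nu$ also leaves $B$. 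Thus $B\cap t_k^*B=\emptyset$ for all $k\neq 0$, and conservativity forces $\PM\Pext(B)=0$. (The same argument with left and right exchanged shows, symmetrically, $\nu(-\infty,0)>0$ for a.e.\ $\nu$; this is used below.) Hence $t(\nu)=t_{\tau(\nu)}^*\nu$ is defined a.e.\ and is exactly the first‑return map of the invertible conservative transformation $t_1^*$ to the conull set $U:=\{\mu\in\NMM:\mu[0,1)>0\}$, so $t$ is itself (a.e.)\ invertible and conservative.

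The crux is to prove that $t$ is non‑singular and to identify its Radon--Nikodym cocycle. I claim the Host‑type identity $t(w\,d\PM\Pext)=\PM\Pext$, where $w(\nu):=\nu[\tau(\nu),\tau(\nu)+1)$ (the push‑forward by $t$ of the measure $w\,d\PM\Pext$ equals $\PM\Pext$). Since $w>0$ a.e., pushing this forward by $t^{-1}$ gives $t^{-1}\PM\Pext=w\,d\PM\Pext\sim\PM\Pext$, so $t$ is non‑singular and the weights $\omega_k(\nu):=\prod_{l=0}^{k-1}w(t^l\nu)$ are the cocycle $\tfrac{d(t^{-k}\PM\Pext)}{d\PM\Pext}$ appearing in the ratio ergodic theorem for $t$. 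To prove the identity I would decompose by the value $m=\tau(\nu)$. The map $t_m^*$ restricts to a bijection of $\{\tau=m\}$ onto $C_m:=\{\mu:\mu[-l,-l+1)=0\ (1\le l<m),\ \mu[-m,-m+1)>0\}$ with inverse $t_{-m}^*$ and $(t_{-m}^*\mu)[m,m+1)=1/\mu[-m,-m+1)$; moreover $\{\tau<\infty\}$ and $\bigcup_{m\ge 1}C_m=\{\mu:\mu(-\infty,0)>0\}$ are conull by the first step. Changing variables $\nu=t_{-m}^*\mu$ then reduces the identity to the family of translation‑quasi‑invariance statements
$$\int_{\{\tau=m\}}h\,d\PM\Pext=\int_{C_m}h(t_{-m}^*\mu)\,\mu[-m,-m+1)\,d\PM\Pext(\mu)\qquad(m\ge 1),$$
and these I would establish from the two defining properties of an Extended ECPD --- adaptedness (which says that, given the measure coordinate, the law of the future symbols follows the masses of $\nu$) together with $\Mext$‑invariance --- by transporting integer translation into the symbolic picture via the auxiliary spaces and isomorphisms listed in the notation list. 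I expect this to be the main obstacle: it is the only place the particular structure of Extended ECPD is invoked, and it is exactly where the (otherwise unmotivated) normalization built into $N$ is used.

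Granting this, $t$ is a non‑singular, invertible, conservative transformation of the probability space $(\NMM,\PM\Pext)$ with cocycle $\omega_k$, so the ratio ergodic theorem for non‑singular transformations (Hurewicz) gives, for every $f\in L^1(\PM\Pext)$ and $\PM\Pext$‑a.e.\ $\nu$,
$$\lim_{J\to\infty}\frac{\sum_{j=0}^{J-1}\omega_j(\nu)\,f(t^j\nu)}{\sum_{j=0}^{J-1}\omega_j(\nu)}=\EE_{\PM\Pext}(f\mid\JJ)(\nu),$$
where $\JJ$ is the $\sigma$‑algebra of $t$‑invariant (equivalently, translation‑invariant) sets and the denominators diverge by conservativity. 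It remains to match this with the stated average. Let $\tau_0=0<\tau_1<\tau_2<\cdots$ be the successive return times of $\nu$ to $U$, so that $t^j\nu=t_{\tau_j}^*\nu$. Since $\tau_j$ is a return time, $(t_{\tau_j}\nu)[0,1)=\nu[\tau_j,\tau_j+1)>0$, so $t_{\tau_j}\nu=\nu[\tau_j,\tau_j+1)\cdot t^j\nu$; an induction using this gives $\nu[\tau_j,\tau_j+1)=\prod_{l=0}^{j-1}w(t^l\nu)=\omega_j(\nu)$, while $\nu[n,n+1)=0$ for every $n\notin\{\tau_0,\tau_1,\dots\}$. Consequently, for $\tau_j<N\le\tau_{j+1}$,
$$\frac{1}{\nu[0,N)}\sum_{n=0}^{N-1}\nu[n,n+1)\,f(t_n^*\nu)=\frac{\sum_{l=0}^{j}\omega_l(\nu)\,f(t^l\nu)}{\sum_{l=0}^{j}\omega_l(\nu)},$$
and since $\tau_j\ge j\to\infty$, letting $N\to\infty$ yields precisely $\EE_{\PM\Pext}(f\mid\JJ)(\nu)$. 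This proves Theorem~\ref{th:DiscreteErgodic}.
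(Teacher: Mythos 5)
Your architecture is exactly the paper's: show $\tau<\infty$ a.e.\ via conservativity (your disjointness argument for $B$ is the paper's argument with $g(\nu)=\max\{n:\nu[n,n+1)>0\}$), pass to the induced map $t=t^*_{\tau(\cdot)}$, prove it is non-singular with Radon--Nikodym derivative $\nu[\tau(\nu),\tau(\nu)+1)$, apply Hurewicz with $p\equiv 1$, identify the cocycle $\omega_j(\nu)=\nu[\tau_j,\tau_j+1)$, and re-index the ratio averages to the stated sums for $\tau_j<N\le\tau_{j+1}$. All of those surrounding steps are correct and match the paper (which carries them out on the factor $\tilde X$ rather than directly on $\NMM$, an inessential difference).

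However, there is a genuine gap at the single step that carries the real content: you never prove the quasi-invariance identity $\frac{dt^{-1}(\PM\Pext)}{d\PM\Pext}(\nu)=\nu[\tau(\nu),\tau(\nu)+1)$. You correctly reduce it, by decomposing over $\{\tau=m\}$ and changing variables via $t^*_{\pm m}$, to the family of statements
$$\int_{\{\tau=m\}}h\,d\PM\Pext=\int_{C_m}h(t_{-m}^*\mu)\,\mu[-m,-m+1)\,d\PM\Pext(\mu),$$
but then only assert that these ``would be established'' from adaptedness and $\Mext$-invariance by transporting translations into the symbolic picture, and you yourself flag this as the main obstacle. This is precisely the paper's Lemma~\ref{lem:NonSing}, whose proof occupies Appendix~\ref{l:non_sing} and is not a routine verification: it uses the chain-system machinery of Section~\ref{sec:structures} --- the conditional-probability formula $\phi_n=\mu_{n-1}[i_n,\ldots,i_0)_{p^{|n|+1}}$ obtained from adaptedness via \eqref{eq:superAdap} and \eqref{eq:phin}, the relation $\frac{dS_aQ}{dQ_n}=\phi_n\circ S_{-a}$ for the finite groups $G_n$ already set up for the conservativity proof, the decomposition of $\tilde Y$ into the countably many sets $\tilde Y_a$ on which $T^{-1}=\theta S_a\theta^{-1}$, and finally the computation $\phi_n\circ\theta^{-1}(\nu,\minusSeq i)=1/\nu(I_{n-1})$, which is what turns the symbolic derivative into $\nu[\tau,\tau+1)$. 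Your proposal neither reproduces this argument nor offers a substitute for it (note that the naive route is blocked: the paper shows the translations $t^*_k$ themselves need not be non-singular, so the identity really must be proved through the conditional structure of the ECPD). Until that lemma is supplied, the application of Hurewicz's theorem --- and hence the whole proof --- does not go through.
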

where $\JJ$ is the $\sigma$ algebra invariant under $t$.

\begin{theorem}\label{th:ContErgodic}
Let $\Pext$ be a non-deterministic ECPD. For every bounded measurable $f:\NMM\rightarrow \mathbb{C}$ define $F^f=\int_0^1 f(t_x^*\nu)d\nu(x)$, then for $\PM\Pext$ almost every $\nu$,
$$\lim \frac{1}{\nu[0,T)} \int_0^T f(t_x^*\nu)d\nu(x)=\EE_{\PM\Pext}(F^f|\JJ)(\nu)$$
\end{theorem}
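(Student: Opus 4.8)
The plan is to obtain Theorem~\ref{th:ContErgodic} from the discrete theorem, Theorem~\ref{th:DiscreteErgodic}, by cutting $[0,T)$ at the successive return times of $\nu$. Recall the first‑return time $\tau(\nu)=\min\{n\in\NN:\nu[n,n+1)>0\}$ and the induced map $t(\nu)=t^*_{\tau(\nu)}\nu$ of \eqref{eq:inducedMap}, and set $\tau_j(\nu):=\sum_{k<j}\tau(t^k\nu)$, so that $\tau_0=0$, the $\tau_j(\nu)$ are exactly the integers $n\ge0$ with $\nu[n,n+1)>0$, and — using that the translation–normalization maps form an $\RR$‑action $t^*_{x+y}=t^*_x\circ t^*_y$ (because $N$ ignores positive scalars) — one has $t^j\nu=t^*_{\tau_j(\nu)}\nu$. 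Writing $p_j(\nu):=\nu[\tau_j(\nu),\tau_j(\nu)+1)>0$, one gets $t^j\nu=\tfrac1{p_j(\nu)}\,t_{\tau_j(\nu)}\nu$ and $p_{j+1}(\nu)=p_j(\nu)\,d(t^j\nu)$, where $d(\mu):=\mu[\tau(\mu),\tau(\mu)+1)$. Since $\tau$ is the \emph{first} return, $\nu$ carries no mass on $[\tau_j(\nu)+1,\tau_{j+1}(\nu))$, so the part of $\nu$ in $[\tau_j(\nu),\tau_{j+1}(\nu))$, shifted left by $\tau_j(\nu)$, is precisely $p_j(\nu)$ times the restriction of $t^j\nu$ to $[0,1)$; together with $t^*_{\tau_j(\nu)+s}\nu=t^*_s(t^j\nu)$ this gives the key identity
$$\int_{[\tau_j(\nu),\,\tau_{j+1}(\nu))} f(t^*_x\nu)\,d\nu(x)=p_j(\nu)\cdot F^f(t^j\nu),$$
and hence, summing over $j<J$ and using $\nu[0,\tau_J(\nu))=\sum_{j<J}p_j(\nu)$,
$$\frac{1}{\nu[0,\tau_J(\nu))}\int_0^{\tau_J(\nu)} f(t^*_x\nu)\,d\nu(x)=\frac{\sum_{j<J}p_j(\nu)\,F^f(t^j\nu)}{\sum_{j<J}p_j(\nu)}.$$

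Next I would feed $F^f$ into Theorem~\ref{th:DiscreteErgodic}. Because $\nu[n,n+1)=0$ unless $n$ is one of the $\tau_j(\nu)$, the average $\frac{1}{\nu[0,N)}\sum_{n<N}\nu[n,n+1)\,g(t^*_n\nu)$, taken along the subsequence $N=\tau_J(\nu)\to\infty$, equals exactly $\bigl(\sum_{j<J}p_j(\nu)g(t^j\nu)\bigr)\big/\bigl(\sum_{j<J}p_j(\nu)\bigr)$. Applying this with $g=F^f$ — which lies in $L^1(\NMM,\PM\Pext)$ since $|F^f(\nu)|\le\|f\|_\infty\,\nu[0,1)=\|f\|_\infty$ a.e.\ — shows that the right‑hand side of the last display converges to $\EE_{\PM\Pext}(F^f|\JJ)(\nu)$ for $\PM\Pext$‑a.e.\ $\nu$ as $J\to\infty$. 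To pass to general real $T$, pick $J=J(T)$ with $\tau_J(\nu)\le T<\tau_{J+1}(\nu)$ (so $J(T)\to\infty$). The part of $\nu$ in $[\tau_J(\nu),T)$ is supported in $[\tau_J(\nu),\tau_J(\nu)+1)$ with mass $\le p_J(\nu)$, so replacing $T$ by $\tau_J(\nu)$ changes the integral by at most $\|f\|_\infty p_J(\nu)$ and $\nu[0,T)$ by at most $p_J(\nu)$; a routine ratio estimate then gives
$$\Bigl|\frac{1}{\nu[0,T)}\int_0^T f(t^*_x\nu)\,d\nu(x)-\frac{\sum_{j<J}p_j(\nu)\,F^f(t^j\nu)}{\sum_{j<J}p_j(\nu)}\Bigr|\le\frac{2\|f\|_\infty\,p_J(\nu)}{\nu[0,\tau_J(\nu))}.$$
Thus the theorem will follow once we prove that this error tends to $0$ a.e., i.e.\ that $p_J(\nu)\big/\sum_{j<J}p_j(\nu)\to0$ for $\PM\Pext$‑a.e.\ $\nu$.

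This last step — negligibility of the final return‑mass relative to the accumulated mass — is the main obstacle, and I would clear it by applying Theorem~\ref{th:DiscreteErgodic} once more, now to $d$ itself. Here the non‑singularity of $t$ enters: the relation $p_{j+1}=p_j\,d(t^j\nu)$ means that $d$ is the one‑step Radon–Nikodym cocycle of $t$ with respect to $\PM\Pext$ and $p_j=\prod_{k<j}d\circ t^k$ its $j$‑th cocycle (this is exactly what makes Theorem~\ref{th:DiscreteErgodic} an instance of Hopf's ratio ergodic theorem), so in particular $\EE_{\PM\Pext}[d]=1$ and $d\in L^1(\PM\Pext)$. Applying Theorem~\ref{th:DiscreteErgodic} to $g=d$ and telescoping,
$$\frac{\sum_{j<J}p_j(\nu)\,d(t^j\nu)}{\sum_{j<J}p_j(\nu)}=\frac{\sum_{j=1}^{J}p_j(\nu)}{\sum_{j<J}p_j(\nu)}=1+\frac{p_J(\nu)-1}{\nu[0,\tau_J(\nu))};$$
since $\sum_{j\ge0}p_j(\nu)=\infty$ a.e.\ (a standard consequence of the conservativity of $t$, itself descending from Theorem~\ref{th:conservative}), the quantities $\tfrac{p_J(\nu)-1}{\nu[0,\tau_J(\nu))}$ and $\tfrac{p_J(\nu)}{\nu[0,\tau_J(\nu))}$ share the same limit, so $\tfrac{p_J(\nu)}{\nu[0,\tau_J(\nu))}\to\EE_{\PM\Pext}(d|\JJ)(\nu)-1$ a.e. This limit function is nonnegative (being an a.e.\ limit of nonnegative quantities) and has integral $\EE_{\PM\Pext}[\EE_{\PM\Pext}(d|\JJ)-1]=\EE_{\PM\Pext}[d]-1=0$, hence it vanishes $\PM\Pext$‑a.e., which is exactly the required negligibility. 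Combined with the first two steps this yields $\lim_T\frac{1}{\nu[0,T)}\int_0^T f(t^*_x\nu)\,d\nu(x)=\EE_{\PM\Pext}(F^f|\JJ)(\nu)$ a.e. The only non‑routine inputs are the identification of $d$ as the Radon–Nikodym cocycle of $t$ (equivalently $\EE_{\PM\Pext}[d]=1$) and the divergence $\sum_j p_j=\infty$; both belong to the non‑singularity analysis of $t$ that already underlies Theorem~\ref{th:DiscreteErgodic}.
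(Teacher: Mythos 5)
Your argument is correct, and its skeleton is the one the paper uses: the identity $\int_{[\tau_j,\tau_{j+1})}f(t_x^*\nu)\,d\nu(x)=\nu[\tau_j,\tau_j+1)\,F^f(t^j\nu)$ reduces the continuous average over $[0,\tau_J)$ to the discrete average of $F^f$, which converges by Theorem~\ref{th:DiscreteErgodic}, and the passage to arbitrary $T$ hinges on showing that the last partial block is negligible, i.e.\ $\nu[\tau_J,\tau_J+1)/\nu[0,\tau_J)\to 0$ a.e. Where you genuinely diverge from the paper is in how you prove this negligibility. The paper gets it in one line from the Chacon--Ornstein Lemma (Lemma~\ref{lem:ChacOrnst}) with $f=p=1$, since $U_T^J1(\nu)=\nu[\tau_J,\tau_J+1)$ and $\sum_{k\le J}U_T^k1(\nu)=\nu[0,\tau_J+1)$; that lemma was already quoted precisely for this purpose. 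You instead apply Theorem~\ref{th:DiscreteErgodic} a second time, to the Radon--Nikodym derivative $d(\nu)=\nu[\tau(\nu),\tau(\nu)+1)$ of the induced map $t$ of \eqref{eq:inducedMap}, telescope $\sum_{j<J}p_j\,d(t^j\nu)=\sum_{j=1}^{J}p_j$, and identify the limit as $\EE_{\PM\Pext}(d|\JJ)-1$, a nonnegative function of integral zero because $\int d\,d\PM\Pext=1$ by Lemma~\ref{lem:NonSing}. This works, but it needs two external inputs you should make explicit: that $d\in L^1$ with $\EE[d]=1$ (which is exactly the content of the non-singularity in Lemma~\ref{lem:NonSing}), and that $\sum_j p_j(\nu)=\infty$ a.e.; the latter is not automatic from the setup but is Hopf's recurrence criterion for conservative non-singular maps ($T$ conservative iff $\sum_n U_T^n 1=\infty$ a.e., see Aaronson), with conservativity of $T$ descending from Theorem~\ref{th:conservative} as in the paper. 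So your route trades the Chacon--Ornstein Lemma for a second use of the Hurewicz-type theorem plus Hopf's criterion; it is a valid and somewhat instructive alternative (it shows $\EE_{\PM\Pext}(d|\JJ)=1$ a.e.\ along the way), but it is longer and imports a standard fact that the paper's choice of lemma avoids.
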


We note that we will prove slightly stronger versions of the theorems stated above, since we will prove them for a factor $\tilde{X} $ of $\PlainXZtilde $ (which we will define later) which is larger than the factor $\NMM$.

\textbf{Independence of  normalization extension. } As mentioned above, our results can be phrased so that they are independent of the way the normalization map is extended to measures with $\mu[0,1)=0$. The action of $\{t_n^* \}_{n \in \ZZ} $ is conservative if and only if the action of $\{t^n \}_{n \in \ZZ} $ is conservative. Since the set of measures $\mu $ with $\mu[0,1)=1 $ is invariant under $t$, we see that Theorem~\ref{th:conservative}  does not in fact depend on the extension of the normalization to measures $\mu $ with $\mu[0,1)=0$.

An equivalent phrasing for the discrete and continuous ergodic theorems, is to remove the requirement that $f $ will be defined only on $\NMM $, and instead allow $f$ to be defined on all of $\MM(\RR)$, but require that for all $\lambda>0 $ and $\mu \in \MM(\RR) $, $f(\mu)=f(\lambda \mu) $. In this case $f(t_x^*\nu)=f(t_x \nu) $, and so again in this phrasing the results do not depend on the extension of the normalization.

\paragraph{Conservativity of typical measures}
 A stronger property than the conservativity of $\Pext $ with respect to translations of measures in $\NMM $, is the conservativity of typical $\nu $ with respect to the action of $\RR $ on itself by translation. Indeed we provide an example of an ECPD for which typical measures are strictly singular with respect to the translation action of $\RR $ on itself, although the ECPD itself is conservative with respect to $\ZZ $ translations.

In general we show that conservativity of typical measures holds, with respect to $\DR $ translations, if and only if a property we call bilateral determinism holds. If bilateral determinism doesn't hold, then typical $\nu $ are strictly singular with respect to $\DR $ translations. In particular, for any Extended ECPD, either a.e. measure $\nu$ is conservative, or  a.e. measure is strictly singular.

Unlike deterministic ECPD,  bilaterally deterministic ECPD are `non-trivial'. We show this by showing that the bilaterally deterministic symbolic measures of  \cite{weissANDornstein} generate bilaterally deterministic ECPD. As it was shown in \cite{weissANDornstein} that `Every transformation is bilaterally deterministic', we can deduce there is an abundance of bilaterally deterministic
     ECPD.

For compactness we omit our results on conservativity of typical measures from this paper. The details can be found in \cite{thesis}.


\textbf{\\ Outline of the Paper.} In Section~\ref{sec:structures} we describe additional machinery needed for proving conservativity and the ergodic theorems. We then prove conservativity in Section~\ref{sec:conserv} and the ergodic theorems in Section~\ref{sec:ergodic}.

\section{ECPS Chains} \label{sec:structures}
In  this Section we introduce dynamical systems we call ECPS Chains, which are (in a sense we will describe soon) equivalent to Extended ECPS. We will then use this equivalence for the proofs of conservativity and the Ergodic Theorems in Sections~\ref{sec:conserv} and \ref{sec:ergodic}.  In this Section we only give an overview of how this equivalence is established, and we leave the proofs of all Lemmas stated in this Section to Appendix~\ref{App:one}.

Let $\PP_0[0,1) \subseteq \NMM$ be the space of measures supported in $[0,1)$, which are either $0$ or probability measures. Let $R:\NMM \to \PP_0[0,1)$ be the restriction of measures to $[0,1)$, i.e., the map $\nu \mapsto R\nu $ where $dR\nu=1_{[0,1)}d\nu$.

 For every  $\mu \in \PP_0[0,1) $ and $0 \leq i \leq p-1$, we define a `zooming in' operation
$$\mu^i=RN\rho_i \mu$$
(note the resemblance to the definition of $\Mext$). Define $LS\subseteq(\PP_0[0,1) \times \Lambdap)^{\ZZ} $, the space of `legal sequences', to be
$$LS=\{(\fullSeq{\mu},\fullSeq{i}): \; \mu_{k+1}=\mu_k^{i_{k+1}} \} $$

For every $n \in \ZZ$ we define the projection maps
$$\hat{i}_n (\fullSeq{\mu},\fullSeq{i})=i_n $$
$$\hat{\mu}_n (\fullSeq{\mu},\fullSeq{i})=\mu_n  $$
We note this is an abuse of notation since $\hat{i}_n$ is also defined on $\NMM \times \Lambdap^{\ZZ}$, but this should not cause any confusion.
\begin{definition}
A distribution $\Pchain$ on $LS$ is called adapted, if for every \hfill \break $j \in \Lambdap$ and almost every $(\fullSeq{\mu},\fullSeq{i}) $,
$$\PP_{\Pchain}\left( \hat{i}_1=j| (\hat{\mu}_n,\hat{i}_n)_{n \leq 0}\right)(\fullSeq{\mu},\fullSeq{i})=\mu_0[j \bp $$
\end{definition}
We remark that if $\Pchain$ is  shift-invariant and adapted then  for every $k \in \ZZ$, $l>0$ and $(j_1,\ldots,j_l) \in \Lambdap ^l $,
\begin{equation}\label{eq:superAdap}
\PP_{\Pchain}\left(\hat{i}_{k+1}=j_1,\ldots,\hat{i}_{k+l}=j_l| (\hat{\mu}_n,\hat{i}_n)_{n \leq k}\right)(\fullSeq{\mu},\fullSeq{i})=\mu_k [j_1,\ldots,j_l)_{p^l}
\end{equation}
For a proof  see \cite{furst}.\begin{definition}
A distribution $\Pchain$ on $LS$ is called an \emph{Ergodic CP chain distribution} (Chain ECPD) if $\Pchain$ is adapted, and invariant and ergodic with respect to the shift operator $\Shift$. If this holds, $(LS,\BB,\Shift,\Pchain)$ is called an \emph{Ergodic CP chain system} (Chain ECPS).
\end{definition}

As in the case of Extended ECPD, it will be convenient to restrict ourselves to Chain ECPD which are non-deterministic:
\begin{definition}
\ We say that a distribution $\Pchain$ on $LS$ is deterministic, if for $\Pchain$ almost every $(\fullSeq{\mu},\fullSeq{i}) $,
$$\PP_{\Pchain}\left(\hat{i}_0=i_0|(\hat{\mu}_n,\hat{i}_n)_{n \leq -1}\right)(\fullSeq{\mu},\fullSeq{i})=1 $$
Otherwise we say that $\Pchain$ is non-deterministic.
\end{definition}
We note that non-deterministic Chain ECPD are supported on the shift-invariant set
$$\PlainXZ=\{(\fullSeq{\mu},\fullSeq{i})\in LS: \; \forall j, \; \fullSeq{i} \not \in E_j   \} $$
An example of a Chain ECPD is given in Subsection~\ref{sub:example}.

In order to describe how one can pass from Chain ECPS to ECPS, we will need to introduce some additional terminology. Let $\II$ denote the set of intervals of the form $[a,b)$ (where $b>a$). Every interval $I=[a,b) \in \II$ can be divided into $p$ disjoint intervals in $\II$ with diameter $\frac{b-a}{p}$. For $0 \leq j \leq p-1$ we define $I^j $ to be the $j-th$ interval. (For example, for $I=[0,1)$, $I^j=[j\bp $).
\begin{definition}
\begin{enumerate}
\item We say that $\minusSeq{I}=(I_n)_{n \leq 0} \subseteq \II$ is compatible with \hfill \break $\minusSeq{i}=(i_n)_{n \leq 0} \subseteq \Lambdap^{\ZZ}$ if for every $n < 0$,
$$I_n^{i_{n+1}}=I_{n+1}$$

\item We say that $\minusSeq{I}$ is well based if $I_0=[0,1)$.
\end{enumerate}
\end{definition}
We note that for every sequence $\minusSeq{i}$, there is a unique sequence $\minusSeq{I}$ which is well based and compatible with $\minusSeq{i}$.

Let us denote the projection of $E_j$ onto the non-positive coordinates by $E_j^{-} $, i.e.
$$E_j^{-}=\{ \minusSeq{i}: \; i_n=j \text{ for all negative enough } n\} $$
We note that if $\minusSeq{i} \not \in E_0^{-}\cup E_{p-1}^{-}$, then the sequence of intervals $\minusSeq{I}$ which is well based and compatible with $\minusSeq{i}$ satisfies $\cup_{n \leq 0}I_n=\RR $.

Let $\HH$ be the group of orientation-preserving homotheties on $\RR$. We note that for every $I,J \in \II$, there is a unique $\rho \in \HH$ such that $\rho(I)=J$. We denote this homothety by $\rho_I^J$.

Now, for every point $(\fullSeq{\mu},\fullSeq{i}) \in (\PP_0[0,1) \times \Lambdap)^{\ZZ}$ we define a measure $\nu \in \NMM$ which in fact depends only on the non-positive coordinates $\nu=\nu(\minusSeq{\mu},\minusSeq{i})$. This measure `preserves the information stored  in the sequence $\minusSeq{\mu}$'.

For a given point $(\fullSeq{\mu},\fullSeq{i})$, we first pick the unique sequence $\minusSeq{I} \subseteq \II$ which is well based and compatible with  $\minusSeq{i}$, and then define for every $n \leq 0$,
 $$\tilde{\mu}_n=N\rho_{I_0}^{I_n} \mu_n$$
\begin{restatable}{lemma}{Lemlamb}
\label{Lemma:lambda} %
 For every $n< 0$, there is a $\lambda=\lambda(n)>0$ such that
$$\tilde{\mu}_{n-1} \restrict_{I_n}=\lambda \tilde{\mu}_n \restrict_{I_n}$$
Moreover there is an $n_0$ such that for all $n < n_0$, $\lambda(n)=1$.
\end{restatable}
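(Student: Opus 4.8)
The plan is to compute both sides of the asserted identity explicitly, observe that they are positive scalar multiples of one and the same measure, and then pin down the scalar.

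\emph{Step 1: the identity, and $\lambda(n)>0$.} The compatibility relation $I_{n-1}^{i_n}=I_n$ yields a homothety identity: the preimage of $I_n$ under $\rho_{I_0}^{I_{n-1}}$ is $[i_n\bp$, and the restriction of $\rho_{I_0}^{I_{n-1}}$ to $[i_n\bp$ is the unique homothety sending $[i_n\bp$ onto $I_n$, namely $\rho_{I_0}^{I_n}\circ\rho_{i_n}$. Substituting the legality relation $\mu_n=RN\rho_{i_n}\mu_{n-1}$ into $\tilde\mu_n=N\rho_{I_0}^{I_n}\mu_n$ and using this identity, I would show that $\rho_{I_0}^{I_n}\mu_n$ is a positive multiple of $\rho_{I_0}^{I_{n-1}}\big(1_{[i_n\bp}d\mu_{n-1}\big)=1_{I_n}\,d(\rho_{I_0}^{I_{n-1}}\mu_{n-1})$, which is itself a positive multiple of $\tilde\mu_{n-1}\restrict_{I_n}$. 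Since $N$ rescales a nonzero measure by a positive factor (and $N0=0$), $\tilde\mu_n$ and $\tilde\mu_{n-1}\restrict_{I_n}$ are positive multiples of each other — or both zero, which happens exactly when $\mu_{n-1}$ gives $[i_n\bp$ no mass, and then any $\lambda(n)=1$ works. This already gives a $\lambda(n)>0$.

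\emph{Step 2: when $\lambda(n)=1$.} Write $\tilde\mu_n=\kappa_n\,\tilde\mu_{n-1}\restrict_{I_n}$ with $\kappa_n>0$; since $\tilde\mu_n$ is normalized, $\lambda(n)=\kappa_n^{-1}=1$ if and only if $\tilde\mu_{n-1}\restrict_{I_n}$ is itself normalized. Scale-invariance of $\psi$ gives $\psi(\tilde\mu_{n-1}\restrict_{I_n})=\psi(\tilde\mu_n)$, and since $\tilde\mu_n$ is a positive multiple of $1_{I_n}d\tilde\mu_{n-1}$, any interval null for $\tilde\mu_{n-1}$ is null for $\tilde\mu_n$, so $\psi(\tilde\mu_n)\ge\psi(\tilde\mu_{n-1})$. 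Hence $n\mapsto\psi(\tilde\mu_n)$ is a non-increasing sequence of positive integers as $n\to-\infty$ (indices with $\tilde\mu_n=0$ being handled trivially), so it is eventually equal to some constant $\Psi$, say for $n\le n_1$. For such $n$, $\tilde\mu_{n-1}$ and $\tilde\mu_{n-1}\restrict_{I_n}$ have the same $\psi$, and $\lambda(n)=1$ reduces to the assertion that $\tilde\mu_{n-1}$ puts no mass on the part of the window $[-(\Psi-1),\Psi)$ lying outside $I_n$; and because $\psi(\tilde\mu_{n-1})=\Psi$, the $\tilde\mu_{n-1}$-mass of that window is carried by the two unit intervals at its ends. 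I would finish by recalling that $\bigcup_{m\le0}I_m$ is $\RR$, or a half-line $[\alpha,\infty)$ with $\alpha\le0$, or a half-line $(-\infty,\beta)$ with $\beta\ge1$ (using $I_0=[0,1)\subseteq I_m$ for all $m\le0$), and that $I_m$ increases to this union: in the $\RR$ case the whole window eventually lies inside $I_n$; in each half-line case the end-interval on the unbounded side eventually lies inside $I_n$, while the other end-interval either also does, or is disjoint from $I_{n-1}$, off of which $\tilde\mu_{n-1}$ vanishes. Either way no window-mass is lost, so $\lambda(n)=1$ for all sufficiently negative $n$.

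Step 1 is routine bookkeeping; the content is in Step 2. The difficulty is that the normalizing window $[-(\psi(\nu)-1),\psi(\nu))$ is not translation-equivariant, so the windows used to normalize $\tilde\mu_{n-1}$ and $\tilde\mu_n$ need not a priori match, and when $\bigcup_m I_m$ is merely a half-line the window may protrude from every $I_n$. What rescues the argument is the monotonicity, hence eventual constancy, of $n\mapsto\psi(\tilde\mu_n)$, together with the fact that any protruding part of the window lies outside the support of $\tilde\mu_{n-1}$. (For the non-deterministic CP systems the paper actually needs, $\fullSeq i\notin E_j$ for every $j$, so $\bigcup_m I_m=\RR$ and only the easy case of Step 2 arises.)
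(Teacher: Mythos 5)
Your proof is correct and follows essentially the same route as the paper's: your Step 1 is the paper's own computation (push $\mu_n$ and $\mu_{n-1}$ forward and use the homothety identity $\rho_{I_0}^{I_n}\rho_{i_n}=\rho_{I_0}^{I_{n-1}}$, then absorb constants into $N$), and your Step 2 rests on the same mechanism as the paper's auxiliary Lemma~\ref{lem:IncreasingInt}: the normalizing window of $\tilde{\mu}_n$ stabilizes as $n\to-\infty$, and the measures eventually agree there, so the normalizing constants coincide. The one genuine difference is that your Step 2 is more careful about degenerate index sequences: the paper's proof of Lemma~\ref{lem:IncreasingInt} asserts that the window $[-m+1,m)$ is eventually contained in $I_n$, which presupposes that $\bigcup_{n\leq 0} I_n$ covers the window; this is automatic when $\minusSeq{i}\notin E_0^-\cup E_{p-1}^-$ (so $\bigcup I_n=\RR$), which is the only case needed for non-deterministic distributions, but not for arbitrary points of $LS$. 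Your case analysis — the window mass of $\tilde{\mu}_{n-1}$ sits in the two end unit intervals, and a protruding end interval is disjoint from $I_{n-1}$ and hence from the support — closes exactly that half-line case, so your argument is a mild strengthening of the written one, at the cost of the slightly longer stabilization step via the monotonicity of $n\mapsto\psi(\tilde{\mu}_n)$.
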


Since $\tilde{\mu}_n $ is supported on $I_n$, it follows that for any Borel set $A$, $\mu_n(A)$ increases as $|n|$ increases (at least for $n < n_0 $), and therefore we can define a measure $\nu$ by
$$\nu(A) =\lim_{n \rightarrow - \infty} \tilde{\mu}_n(A)$$

An example for the construction of $\nu$ is given in Subsection~\ref{sub:example}.

Define $\Phi: \PlainXZ \rightarrow \PlainXZtilde$ by
$$\Phi(\fullSeq{\mu},\fullSeq{i})=(\nu(\minusSeq{\mu},\minusSeq{i}),\fullSeq{i}) $$

then we have
\begin{restatable}{lemma}{isomorphism}
Every non-deterministic Chain ECPS $(\PlainXZ,\BB,\Shift,\Pchain)$ is measure-theoretically isomorphic to the non-deterministic Extended ECPS $(\PlainXZtilde,\BB, \Mext, \Phi \Pchain)$.
\end{restatable}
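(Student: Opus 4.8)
The plan is to show that $\Phi$, after restriction to a $\Pchain$-conull $\Shift$-invariant set, is a bi-measurable bijection onto a $\Phi\Pchain$-conull $\Mext$-invariant set intertwining $\Shift$ with $\Mext$, and then to check that the push-forward $\Phi\Pchain$ is a non-deterministic Extended ECPD. The first step collects the measurability and equivariance of $\Phi$. The well-based sequence $\minusSeq{I}$ compatible with $\minusSeq{i}$ depends continuously on $\minusSeq{i}$, so each approximant $\tilde\mu_n=N\rho_{I_0}^{I_n}\mu_n$ is a Borel function of $(\fullSeq{\mu},\fullSeq{i})$, and by Lemma~\ref{Lemma:lambda} the limit $\nu=\lim_n\tilde\mu_n$ exists on all of $\PlainXZ$; hence $\Phi$, being a pointwise limit of Borel maps into a separable metric space (tupled with the identity on the symbolic coordinates), is Borel. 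For the equivariance, $\Phi\circ\Shift$ and $\Mext\circ\Phi$ both act on the symbolic coordinate by $\Shift$, so only the measure coordinate is in question; writing $\minusSeq{J}$ for the well-based sequence compatible with $\Shift(\minusSeq{i})$, one checks $J_n=\rho_{I_1}^{I_0}(I_{n+1})$, hence $\rho_{J_0}^{J_n}=\rho_{i_1}\circ\rho_{I_0}^{I_{n+1}}$, so the $n$-th approximant attached to $\Shift(\fullSeq{\mu},\fullSeq{i})$ equals $N\rho_{i_1}$ of the $(n{+}1)$-st approximant attached to $(\fullSeq{\mu},\fullSeq{i})$. Letting $n\to-\infty$ and using the clause $\lambda(n)=1$ for $n<n_0$ of Lemma~\ref{Lemma:lambda} to interchange $N$ with the (eventually increasing) limit gives $\nu(\Shift\minusSeq{\mu},\Shift\minusSeq{i})=N\rho_{i_1}\nu(\minusSeq{\mu},\minusSeq{i})$, which is the measure coordinate of $\Mext\circ\Phi$.

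Next I would establish essential invertibility. Let $\PlainXZ'\subseteq\PlainXZ$ be the set where $\mu_n$ is a probability measure for all $n$; summing the adaptedness identity over $j\in\Lambdap$ gives $\mu_0[0,1)=1$ a.s., and $\Shift$-invariance of $\Pchain$ then gives $\mu_k[0,1)=1$ a.s.\ for every $k$, so $\PlainXZ'$ is $\Shift$-invariant and $\Pchain$-conull. On $\PlainXZ$ one has $\minusSeq{i}\notin E_0^{-}\cup E_{p-1}^{-}$, so $\bigcup_{n\le0}I_n=\RR$, and Lemma~\ref{Lemma:lambda} together with the construction shows that for every $n\le0$ the restriction $\nu|_{I_n}$ is a positive multiple of $\rho_{I_0}^{I_n}\mu_n$. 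Hence on $\PlainXZ'$ we recover $\mu_n=N\rho_{I_n}^{I_0}(\nu|_{I_n})$ for $n\le0$ and $\mu_n$ for $n>0$ from $\mu_0$ and $\fullSeq{i}$ via $\mu_{k+1}=\mu_k^{i_{k+1}}$; this defines a Borel map $\Psi$ on $\PlainXZtilde$ with $\Psi\circ\Phi=\mathrm{id}$ on $\PlainXZ'$, whence also $\Phi\circ\Psi=\mathrm{id}$ on $\Phi(\PlainXZ')$. Thus $\Phi$ restricts to a Borel isomorphism of $\PlainXZ'$ onto $\Phi(\PlainXZ')$ (a Borel set, e.g.\ by the Lusin--Souslin theorem, or directly as $\{y:\Psi(y)\in\PlainXZ',\ \Phi(\Psi(y))=y\}$), which is $\Phi\Pchain$-conull and, by the previous step, $\Mext$-invariant. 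Consequently $\Phi$ is a measure-theoretic isomorphism of $(\PlainXZ,\BB,\Shift,\Pchain)$ onto $(\PlainXZtilde,\BB,\Mext,\Phi\Pchain)$, so $\Phi\Pchain$ is $\Mext$-invariant and ergodic.

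It remains to see that $\Phi\Pchain$ is a non-deterministic Extended ECPD. Since $\Phi$ fixes every symbolic coordinate, and on $\PlainXZ'$ each $\hat\mu_n$ with $n\le0$ is a Borel function of $(\hat\nu\circ\Phi,(\hat i_m)_{m\le0})$ while $\hat\nu\circ\Phi$ is $\sigma((\hat\mu_m,\hat i_m)_{m\le0})$-measurable, transporting along $\Phi$ identifies $\sigma(\hat\nu,(\hat i_n)_{n\le0})$ with $\sigma((\hat\mu_n,\hat i_n)_{n\le0})$ modulo $\Pchain$-null sets. Combining this with the adaptedness of $\Pchain$ and with the a.s.\ identity $\hat\mu_0=R\hat\nu$ (valid on $\PlainXZ'$ since there $\nu|_{[0,1)}$ is a positive multiple of the probability measure $\mu_0$, and $\nu\in\NMM$ with $\nu[0,1)>0$ forces $\nu[0,1)=1$) gives $\PP_{\Phi\Pchain}\!\left(\hat i_1=j\mid(\hat\nu,(\hat i_n)_{n\le0})\right)(\nu,\fullSeq{i})=\nu[j\bp$, i.e.\ $\Phi\Pchain$ is adapted; the same identification, together with $\hat i_1\circ\Phi=\hat i_1$, shows that $\Phi\Pchain$ is non-deterministic iff $\Pchain$ is. Finally $\Phi(\PlainXZ)\subseteq\PlainXZtilde$ since the symbolic coordinate, which avoids every $E_j$, is unchanged and $\nu\in\NMM$ by construction. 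Hence $\Phi\Pchain$ is a non-deterministic Extended ECPD, completing the proof.

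I expect the main obstacle to be the equivariance step, together with the closely related computation invoked in the invertibility step that $\nu|_{I_n}$ is proportional to $\rho_{I_0}^{I_n}\mu_n$: one must track how the homotheties $\rho_{I_0}^{I_n}$ transform under the shift and under restriction to the nested intervals $I_n$, and---most delicately---justify interchanging the normalization $N$ with the limit defining $\nu$, which is exactly where the eventual-constancy clause $\lambda(n)=1$ of Lemma~\ref{Lemma:lambda} is indispensable.
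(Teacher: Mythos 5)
Your proposal is correct and follows essentially the same route as the paper: the equivariance computation via $\rho_{J_0}^{J_n}=\rho_{I_1}^{I_0}\rho_{I_0}^{I_{n+1}}$ together with the interchange of $N$ with the eventually increasing limit, and invertibility via the explicit inverse $\mu_n=NR\rho_{I_n}^{I_0}\nu\restrict_{I_n}$ (extended forward by $\mu_{k+1}=\mu_k^{i_{k+1}}$), which is exactly the paper's formula for $\Phi^{-1}$. You are merely more explicit than the paper about measurability, the conull invariant set on which the inversion is carried out, and the transport of adaptedness and non-determinism to $\Phi\Pchain$, all of which the paper leaves implicit.
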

%

For our proofs later on, it will be useful to erase the non-positive coordinates in the spaces we just discussed. Defined $\pi_{-} $ and $\tilde{\pi}_{-} $ on $\PlainXZ $ and $\PlainXZtilde $ respectively, by
$$\pi_{-}(\fullSeq{\mu},\fullSeq{i})=(\minusSeq{\mu},\minusSeq{i})$$
$$\tilde{\pi}_{-}(\nu,\fullSeq{i})=(\nu,\minusSeq{i})$$
and set
$$X=\pi_{-}\PlainXZ \; \; , \; \;   \tilde{X}=\tilde{\pi}_{-}\PlainXZtilde$$

We note that if $\Pchain $ is a non-deterministic chain ECPD, then $\Pminus=\pi_{-} \Pchain$ is an invariant and ergodic distribution on $X$ with respect to the right-shift operator which we denote by $\RightShift \; $.

The map
$$\theta(\minusSeq{\mu},\minusSeq{i})=(\nu(\minusSeq{\mu},\minusSeq{i}),\minusSeq{i}) $$
from $X$ to $\tilde{X}$ is a bijection (the proof is the same as the proof of the previous Lemma) and has the property
\begin{equation} \label{eq:PsiTheta} \tilde{\pi}_{-} \Phi=\theta \pi_{-} \end{equation}

Finally, the map $\tilde{\pi}_{\MM}(\nu,\minusSeq{i})=\nu$ from $\tilde{X} $ to $\NMM$ satisfies
\begin{equation} \label{eq:ProjectionIntertwining} \PM=\tilde{\pi}_{\MM} \tilde{\pi}_{-} \end{equation}
The following diagram summarizes the relations between the spaces and distributions described above (where $\tilde{Q}=\tilde{\pi}_{-}\Pext $) .
\begin{equation*}
\xymatrix{
\left( \PlainXZ, \Pchain \right)  \ar[r]^-{\simeq}_-{\Phi}   \ar[d]^{\pi_{-}}  &  \left( \PlainXZtilde , \Pext \right)\ar[d]^{\tilde{\pi}_{-}} \ar@/^3pc/[dd]^{\hat{\nu}}     \\
  \left(X ,Q \right)   \ar[r]^{\simeq}_{\theta}         &      \left( \tilde{X}, \tilde{Q} \right)  \ar[d]^{\tilde{\pi}_{\MM}} \\
                                &    \left(\NMM  , \hat{\nu}\Pext\right)                   }
\end{equation*}

We now define `translation maps' on $X$ and $\tilde{X}$, which will enable us to prove our assertions regarding the translation maps on $\NMM $ through analogous claims on the translation maps on $X$.
\subsection{Translation Maps}\label{sub:translationOp}
We begin by extending the  definition of the translation maps $\{t^*_k\}_{k \in \ZZ} $ from Subsection~\ref{sub:MainResults} by defining maps $s_k$ on $\Lambdap^{\ZZ_{-}}$, and then defining translation maps $\{T_k \}_{k \in \ZZ}$ on $\tilde{X}$ by
$$T_k(\nu,\minusSeq{i})=(t^*_k \nu,s_k(\minusSeq{i})) $$
 We will then define  analogous `translation maps' on $X$.

Fix some sequence $\minusSeq{i} \not \in E_0^- \cup E_{p-1}^-$ . The sequence of intervals $\minusSeq{I}$ which is compatible with $\minusSeq{i}$ satisfies $\cup I_n= \RR$. Therefore, for a given $k \in \ZZ$ there is some $n_0 \leq 0$ such that $[k,k+1) \subseteq I_{n_0}$. There are indices  $(j_{n_0+1},j_{n_0+2}, \ldots,j_0) \in \Lambdap^{n_0} $ such  that $$[k,k+1)=(\ldots((I_{n_0}^{j_{n_0+1}})^{j_{n_0+2}})\ldots)^{j_0}$$
We now define
$$s_k(\minusSeq{i})_m=\twopartdef{i_m}{m \leq n_0}{j_m}{m > n_0} $$
which completes the definition of $\{T_k \}_{k \in \ZZ} $. We note that $s_k$ is defined so that the sequences $\minusSeq{I} $ and $\minusSeq{J}$ which are well based and compatible with $\minusSeq{i} $ and $s_k(\minusSeq{i}) $ respectively, have the property that for all negative enough $n $, $J_n=I_n-k $. In fact $s_k(\minusSeq{i}) $ is the unique sequence which has this property.  We conclude our discussion of translation maps on $\tilde{X}$ with
\begin{restatable}{lemma}{action}\label{lem:action} %
The maps $\{ T_k \}_{k \in \ZZ} $ define an action of $\ZZ$ on $\tilde{X}$.
\end{restatable}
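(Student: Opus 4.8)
The plan is to check directly that $k \mapsto T_k$ is a homomorphism from $(\ZZ,+)$ into the group of bijections of $\tilde{X}$. Concretely I will verify three things: (i) each $T_k$ maps $\tilde{X}$ into itself; (ii) $T_0 = \mathrm{id}_{\tilde{X}}$; and (iii) $T_k \circ T_l = T_{k+l}$ for all $k,l \in \ZZ$. Granting these, applying (iii) with $l=-k$ together with (ii) shows that $T_{-k}$ is a two-sided inverse of $T_k$, so each $T_k$ is a bijection and $k \mapsto T_k$ is a $\ZZ$-action. Since $T_k(\nu,\minusSeq{i}) = (t^*_k\nu,\,s_k(\minusSeq{i}))$ acts coordinatewise, each of (i)--(iii) splits into a statement about the measure coordinate $\NMM$ and a statement about the symbolic coordinate.

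The measure coordinate is routine and rests on the scale invariance of $N$: since $\psi(\lambda\nu)=\psi(\nu)$ for every $\lambda>0$, we get $N(\lambda\nu)=N\nu$, and in particular $N\circ N=N$, so $N$ restricts to the identity on $\NMM = N(\MM(\RR))$. Combined with $t_k\circ t_l=t_{k+l}$ on $\RR$ (hence on pushforwards of measures) and $t_k(\lambda\nu)=\lambda\,t_k\nu$, this yields, for $\nu\neq 0$ (write $Nt_l\nu = c\,t_l\nu$ with $c>0$ the normalizing constant of $t_l\nu\neq 0$), the chain $t^*_k(t^*_l\nu) = N t_k\big(N t_l\nu\big) = N\big(c\,t_k t_l\nu\big) = N\big(t_{k+l}\nu\big) = t^*_{k+l}\nu$; the case $\nu=0$ is immediate, and $t^*_0\nu = N t_0\nu = N\nu = \nu$ for $\nu\in\NMM$. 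Also $t^*_k\nu\in\NMM$ always, by definition of $N$.

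For the symbolic coordinate I will use the characterization recorded just before the statement: if $\minusSeq{I}$ is the unique well based interval sequence compatible with $\minusSeq{i}$, then $s_k(\minusSeq{i})$ is the unique index sequence whose well based compatible interval sequence $\minusSeq{J}$ satisfies $J_n = I_n - k$ for all sufficiently negative $n$. (This follows from the explicit definition of $s_k$ together with the observation that an index sequence is recovered from its well based compatible interval sequence, since each $I_n$ determines $i_{n+1}$ as the unique $j$ with $I_n^j = I_{n+1}$.) For (i): from the formula $s_k(\minusSeq{i})_m = i_m$ for $m\le n_0$, if $s_k(\minusSeq{i})$ were eventually equal to a constant $j$ then $\minusSeq{i}$ would be too, contradicting $\minusSeq{i}\notin E_j^-$; hence $s_k(\minusSeq{i})\notin E_j^-$ for every $j$, and $T_k$ preserves $\tilde{X}$. (In particular $\minusSeq{i}\notin E_0^-\cup E_{p-1}^-$, so $\bigcup_n I_n = \RR$ and an $n_0$ with $[k,k+1)\subseteq I_{n_0}$ exists, so $s_k$ is defined on all of $\tilde{X}$.) For (ii): the identity interval sequence trivially satisfies $J_n = I_n - 0$, so uniqueness gives $s_0(\minusSeq{i}) = \minusSeq{i}$. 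For (iii): let $\minusSeq{J}$ be the interval sequence of $s_l(\minusSeq{i})$, so $J_n = I_n - l$ eventually, and let $\minusSeq{K}$ be the interval sequence of $s_k(s_l(\minusSeq{i}))$, so $K_n = J_n - k = I_n - (k+l)$ eventually; by the uniqueness characterization, $s_k(s_l(\minusSeq{i})) = s_{k+l}(\minusSeq{i})$.

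The only step that is not purely mechanical is pinning down the interval-sequence characterization of $s_k$ used above; the paper records it as a remark, and the small argument I sketched (recovering the indices from the intervals, then matching translated interval sequences by uniqueness) is the real content. Once that is in place, everything else is bookkeeping with the definitions of $N$ and $t_x$, so I expect no genuine obstacle — the main task is simply to organize the verification into the coordinatewise statements (i)--(iii).
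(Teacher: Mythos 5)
Your proposal is correct and follows essentially the same route as the paper's proof: the measure coordinate is handled via the scale-invariance of $N$ (so $Nt_kNt_l\nu=Nt_{k+l}\nu$), and the symbolic coordinate via the uniqueness of the well based compatible interval sequence satisfying $J_n=I_n-k$ for all negative enough $n$. The extra checks you include ($T_0=\mathrm{id}$, and that $s_k(\minusSeq{i})\notin E_j^-$ so $T_k$ preserves $\tilde{X}$) are routine points the paper leaves implicit, and they are argued correctly.
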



We now define the analogue of translation maps on $X$.  The group which will act on $X$ is not $\ZZ$ but rather a different group we will now describe. $\Lambdap^{\ZZ_{-}}$ is a group with respect to the addition operation defined by
$$(\minusSeq{i}+\minusSeq{j})_m=i_m+j_m \mod p $$
and $G=\{\minusSeq{i}: \; i_n=0 \text{ for all negative enough } n\} $ is a subgroup, which is the union of the groups
$$G_m=\{\minusSeq{i}: \; i_n=0 \text{ for every } n<m \} $$
defined for all $m \leq 0$. Returning to the definition of $s_k$, we see that $s_k(\minusSeq{i})$ and $\minusSeq{i}$ agree for every $n \leq n_0$, and therefore there is an $a \in G_{n_0+1}$ such that $s_k(\minusSeq{i})=\minusSeq{i}+a $. This fact gives the motivation for the following definition:

For every $m \leq0  $ and $a \in G_m$, the map $S_a:X \to X$ will be defined by
$$S_a(\minusSeq{\mu},\minusSeq{i})=(\minusSeq{\eta},\minusSeq{j}) $$
where
$$\minusSeq{j}=\minusSeq{i}+a $$
 for every $n<m$, $\eta_n=\mu_n$, and $\eta_m,\eta_{m+1}, \ldots \eta_0$ are defined recursively by
$$\eta_m=\eta_{m-1}^{j_m} , \eta_{m+1}=\eta_m^{j_{m+1}}, \ldots \eta_0=\eta_1^{j_0} $$
In other words, $\minusSeq{\eta}$ is the unique sequence of measures with $\eta_n=\mu_n $ for all negative enough $n$ which satisfies $(\minusSeq{\eta},\minusSeq{j}) \in X$.

The relation between the action of $G$ on $X$ and the action of $\ZZ$ on $\tilde{X} $ is given by
\begin{restatable}{lemma}{relation}\label{lem:relation}
For every $(\minusSeq{\mu},\minusSeq{i}) \in X $, the following holds
\begin{enumerate}
\item For every $k \in \ZZ$, there is an $a \in G$ such that
\begin{equation} \label{eq:TranslationIntertwining}\theta S_a (\minusSeq{\mu},\minusSeq{i})=T_k \theta (\minusSeq{\mu}, \minusSeq{i})\end{equation}
\item For every $a \in G$, there is a $k \in \ZZ$ such that Equation~\ref{eq:TranslationIntertwining} holds.
\item Assume that for $a \in G$, $k \in \ZZ$, Equation~\ref{eq:TranslationIntertwining} holds. Then $a=0$ if and only if $k=0$.
\end{enumerate}
\end{restatable}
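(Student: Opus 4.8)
The plan is to prove all three parts by decoupling the identity~\eqref{eq:TranslationIntertwining} into its two coordinates: a combinatorial statement matching the symbolic datum $s_k(\minusSeq{i})$ with $\minusSeq{i}+a$, and a measure-theoretic statement that $\nu\bigl(S_a(\minusSeq{\mu},\minusSeq{i})\bigr)=t^*_k\,\nu(\minusSeq{\mu},\minusSeq{i})$ once $k$ and $a$ are so matched. Throughout, fix $(\minusSeq{\mu},\minusSeq{i})\in X$, let $\minusSeq{I}$ be the well based sequence of intervals compatible with $\minusSeq{i}$, recall that $\minusSeq{i}\notin E_0^-\cup E_{p-1}^-$ so that $\bigcup_n I_n=\RR$, and observe that a downward induction from $I_0=[0,1)$ shows every $I_n$ has integer endpoints. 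For the measure construction write $\tilde{\mu}_n=N\rho_{I_0}^{I_n}\mu_n$ as in the text, and likewise $\tilde{\eta}_n$ for the auxiliary measures attached to a second point $(\minusSeq{\eta},\minusSeq{j})$; the case $\nu(\minusSeq{\mu},\minusSeq{i})=0$ is immediate, so assume it is nonzero.

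\textbf{Symbolic correspondence.} Let $m\le 0$, $a\in G_m$, and put $\minusSeq{j}=\minusSeq{i}+a$, so $j_n=i_n$ for $n<m$. Then $(\cdots(I_{m-1}^{j_m})\cdots)^{j_0}$ is a unit interval with integer endpoints, hence equals $[k,k+1)$ for a unique $k\in\ZZ$; unwinding the definition of $s_k$ with $n_0=m-1$ (valid since $[k,k+1)\subseteq I_{m-1}$) gives $s_k(\minusSeq{i})=\minusSeq{j}$. Conversely, for any $k\in\ZZ$ pick $n_0$ with $[k,k+1)\subseteq I_{n_0}$; the definition of $s_k$ shows $s_k(\minusSeq{i})$ agrees with $\minusSeq{i}$ on coordinates $\le n_0$, so $a:=s_k(\minusSeq{i})-\minusSeq{i}\in G_{n_0+1}\subseteq G$. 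This supplies the symbolic half of (1) and (2). For (3): whenever~\eqref{eq:TranslationIntertwining} holds, comparing symbolic coordinates forces $s_k(\minusSeq{i})=\minusSeq{i}+a$, so $a=0$ iff $s_k(\minusSeq{i})=\minusSeq{i}$; as $s_0=\mathrm{id}$ this happens when $k=0$, and if it happens then the well based compatible sequence of $s_k(\minusSeq{i})=\minusSeq{i}$, which by uniqueness is $\minusSeq{I}$, must satisfy $I_n=I_n-k$ for all sufficiently negative $n$ by the characterizing property of $s_k$, forcing $k=0$ since each $I_n$ is bounded.

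\textbf{Measure correspondence.} Let $a\in G_m$ and $k$ be matched as above, set $(\minusSeq{\eta},\minusSeq{j})=S_a(\minusSeq{\mu},\minusSeq{i})$, and let $\minusSeq{J}$ be the well based sequence compatible with $\minusSeq{j}$. Translating the identity $[k,k+1)=(\cdots(I_{m-1}^{j_m})\cdots)^{j_0}$ by $-k$ and using uniqueness of well based compatible sequences gives $J_{m-1}=I_{m-1}-k$, and since $j_n=i_n$ for $n<m$ a downward induction yields $J_n=I_n-k$ for all $n\le m-1$; by construction $\eta_n=\mu_n$ there as well. Using $\rho_{I_0}^{I_n-k}=t_k\circ\rho_{I_0}^{I_n}$ and that $N$ is scale invariant while $t_k$ is scale equivariant (hence $N t_k N\zeta=N t_k\zeta$), we obtain for all $n\le m-1$
$$\tilde{\eta}_n=N\rho_{J_0}^{J_n}\eta_n=N\rho_{I_0}^{I_n-k}\mu_n=N\,t_k\bigl(\rho_{I_0}^{I_n}\mu_n\bigr)=N\,t_k\tilde{\mu}_n ,$$
both sides vanishing when $\mu_n=0$. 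By Lemma~\ref{Lemma:lambda} and the definition of $\nu(\cdot)$, for all sufficiently negative $n$ we have $\nu(\minusSeq{\mu},\minusSeq{i})\restrict_{I_n}=\tilde{\mu}_n$ and $\nu(\minusSeq{\eta},\minusSeq{j})\restrict_{J_n}=\tilde{\eta}_n$; combining, $\nu(\minusSeq{\eta},\minusSeq{j})\restrict_{J_n}=N\bigl((t_k\nu(\minusSeq{\mu},\minusSeq{i}))\restrict_{J_n}\bigr)$. Since $J_n\uparrow\RR$, the normalizing interval of $(t_k\nu(\minusSeq{\mu},\minusSeq{i}))\restrict_{J_n}$ is eventually a fixed interval, so the positive scalar relating the two sides is eventually a constant $c>0$; letting $n\to-\infty$ gives $\nu(\minusSeq{\eta},\minusSeq{j})=c\,t_k\nu(\minusSeq{\mu},\minusSeq{i})$, and applying $N$ (the left side lying in $\NMM$ already) yields $\nu(\minusSeq{\eta},\minusSeq{j})=N t_k\nu(\minusSeq{\mu},\minusSeq{i})=t^*_k\nu(\minusSeq{\mu},\minusSeq{i})$. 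Together with the symbolic correspondence this gives~\eqref{eq:TranslationIntertwining} and completes (1) and (2).

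\textbf{Main obstacle.} The symbolic bookkeeping and part (3) are formal once one has the dictionary between $\minusSeq{i}$, its interval sequence $\minusSeq{I}$, and the $p$-adic addresses of integer unit intervals. The real work is the measure correspondence: verifying that the finitely many ``transition'' coordinates between $0$ and $m$ do not interfere, locating the threshold past which Lemma~\ref{Lemma:lambda} makes the tails $\tilde{\mu}_n,\tilde{\eta}_n$ literally equal to restrictions of $\nu(\minusSeq{\mu},\minusSeq{i})$ and $\nu(\minusSeq{\eta},\minusSeq{j})$, and confirming that the normalization constants stabilize so the limit $n\to-\infty$ can be interchanged with $N\circ t_k$.
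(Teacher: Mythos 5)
Your proposal is correct and follows essentially the same route as the paper's proof: decouple \eqref{eq:TranslationIntertwining} into the symbolic matching $s_k(\minusSeq{i})=\minusSeq{i}+a$ (in both directions) and the measure-coordinate identity via $\rho_{I_0}^{I_n-k}=t_k\,\rho_{I_0}^{I_n}$ together with the properties of $N$, deducing (3) from injectivity of the symbolic correspondence at $0$. The only difference is presentational: the paper pushes $N t_k$ through the limit defining $\nu$ (its interchanging-limits remark), whereas you argue via restrictions to $J_n$ and a stabilizing normalization constant, which amounts to the same computation.
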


This can be used to show
\begin{restatable}{lemma}{conservativityequivalence}\label{lem:conservativityequivalence}
The action of $G$ on $X$ is conservative with respect to a probability distribution $Q$ on $X$, if and only if the action of $\ZZ$ on $\tilde{X}$ is conservative with respect to $\theta Q$.
\end{restatable}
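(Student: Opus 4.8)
The plan is to reduce conservativity of the $\ZZ$-action on $\tilde X$ to conservativity of the $G$-action on $X$ via the bijection $\theta$, using the correspondence between the two actions recorded in Lemma~\ref{lem:relation}. Since $\theta: X \to \tilde X$ is a measure-theoretic bijection carrying $Q$ to $\theta Q$, the preimage map $\theta^{-1}$ identifies Borel sets of positive measure in $\tilde X$ with Borel sets of positive measure in $X$. So it suffices to show: for a Borel set $A \subseteq X$, there exists $a \in G \setminus \{0\}$ with $Q(A \cap S_a A) > 0$ if and only if there exists $k \in \ZZ \setminus \{0\}$ with $(\theta Q)(\theta A \cap T_k(\theta A)) > 0$. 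The bridge is the intertwining identity $\theta S_a = T_k \theta$ of Equation~\ref{eq:TranslationIntertwining}, together with parts (1)--(3) of Lemma~\ref{lem:relation}.

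First I would establish, for each fixed $(\minusSeq{\mu},\minusSeq{i}) \in X$, that the identity $\theta S_a = T_k \theta$ evaluated at that point gives $\theta(S_a(\minusSeq{\mu},\minusSeq{i})) \in T_k(\theta A)$ precisely when $S_a(\minusSeq{\mu},\minusSeq{i}) \in A$ (apply $\theta^{-1}$). Hence $(\minusSeq{\mu},\minusSeq{i}) \in A \cap S_a^{-1}A$ iff $\theta(\minusSeq{\mu},\minusSeq{i}) \in \theta A \cap T_k^{-1}(\theta A)$ — but one must be slightly careful, since in Lemma~\ref{lem:relation} the index $k$ corresponding to a given $a$ (and vice versa) may \emph{depend on the base point} $\minusSeq{i}$. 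This is the main subtlety: the correspondence $a \leftrightarrow k$ is not a group isomorphism but a pointwise matching. To handle it, for the forward direction I would take $a \in G\setminus\{0\}$ with $Q(A \cap S_a^{-1} A)>0$, decompose $X = \bigsqcup_{k \in \ZZ} X_k^a$ where $X_k^a = \{(\minusSeq{\mu},\minusSeq{i}) : \theta S_a(\minusSeq{\mu},\minusSeq{i}) = T_k\theta(\minusSeq{\mu},\minusSeq{i})\}$ (this is a countable Borel partition, nonempty-intersecting by part (1), and by part (3) the piece $X_0^a$ is empty since $a \neq 0$). Then some $X_k^a$ with $k \neq 0$ satisfies $Q(A \cap S_a^{-1}A \cap X_k^a) > 0$, and on that set $\theta$ maps $A \cap S_a^{-1}A \cap X_k^a$ into $\theta A \cap T_k^{-1}(\theta A)$, giving $(\theta Q)(\theta A \cap T_k^{-1}(\theta A)) > 0$; since $\theta Q$ is $T_k$-quasi-invariant (indeed $T_k$ preserves or is non-singular for $\theta Q$ — or one simply notes positivity of $\theta A \cap T_k^{-1}\theta A$ forces positivity of $\theta A \cap T_k \theta A$ up to replacing $k$ by $-k$), we conclude conservativity of the $\ZZ$-action. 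The reverse direction is symmetric, using part (2) of Lemma~\ref{lem:relation} to go from $k$ to $a$, and part (3) again to ensure $k \neq 0 \Rightarrow a \neq 0$.

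The one remaining technical point is measurability of the sets $X_k^a$, which follows because $S_a$, $T_k$, $\theta$ and $\theta^{-1}$ are all Borel maps, so $X_k^a = (\theta S_a)^{-1}(\text{graph-type condition})$ is Borel; and the subtlety about $A \cap S_a^{-1}A$ versus $A \cap S_a A$ is resolved by noting $Q$ is $S_a$-non-singular (in fact, in the conservative setting one freely passes between $g$ and $g^{-1}$: $\mu(A \cap gA)>0 \iff \mu(g^{-1}A \cap A)>0$, and $g^{-1}$ ranges over the same group). I expect the main obstacle to be purely bookkeeping: keeping the pointwise (rather than global) nature of the $a \leftrightarrow k$ correspondence straight, which the countable Borel partition $\{X_k^a\}_{k}$ cleanly resolves.
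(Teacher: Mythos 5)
Your argument is correct and is essentially the paper's own: both reduce the statement through the bijection $\theta$ and the pointwise correspondence of Lemma~\ref{lem:relation}, the only difference being that the paper handles the point-dependence of the $a\leftrightarrow k$ matching by taking the union over all nonzero elements at once (so that $\bigcup_{a\neq 0}\theta S_a A=\bigcup_{k\neq 0}T_k\theta A$ and countability of $G$ and $\ZZ$ finishes), whereas you fix one element and partition $X$ into the countably many Borel sets $X_k^a$. One small correction: your parenthetical claim that $\theta Q$ is $T_k$-quasi-invariant is false in general (the paper notes in Section~\ref{sec:ergodic} that $T_1\tilde{Q}\perp\tilde{Q}$ can occur), but this is harmless because your fallback is the right justification: $T_k^{-1}=T_{-k}$, so $\theta A\cap T_k^{-1}\theta A$ is already a set of the required form for the nonzero integer $-k$.
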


Also, the following holds
\begin{restatable}{lemma}{factorconservativity}\label{lem:factorconservativity}
 If the action of $\ZZ$ on $\tilde{X}$ is conservative with respect to $\tilde{Q}$, then the action of $\ZZ$ on $\NMM$ is conservative with respect to $\tilde{\pi}_{\MM}\tilde{Q} $.
\end{restatable}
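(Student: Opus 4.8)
The plan is to show that $\tilde{\pi}_{\MM}$ is a measure-preserving factor map between the relevant $\ZZ$-actions, and then to invoke the general principle that conservativity descends to factors.

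First I would record the intertwining relation. By the definition of the translation maps on $\tilde{X}$, for every $k\in\ZZ$ and every $(\nu,\minusSeq{i})\in\tilde{X}$,
$$\tilde{\pi}_{\MM}\big(T_k(\nu,\minusSeq{i})\big)=\tilde{\pi}_{\MM}\big(t_k^*\nu,s_k(\minusSeq{i})\big)=t_k^*\nu=t_k^*\,\tilde{\pi}_{\MM}(\nu,\minusSeq{i}),$$
so $\tilde{\pi}_{\MM}\circ T_k=t_k^*\circ\tilde{\pi}_{\MM}$. Since $\tilde{\pi}_{\MM}$ is continuous (hence Borel) and $\tilde{\pi}_{\MM}\tilde{Q}$ is by definition the pushforward, $\tilde{\pi}_{\MM}$ is a measure-preserving factor map from the $\ZZ$-action $(T_k)_{k\in\ZZ}$ on $(\tilde{X},\tilde{Q})$ to the $\ZZ$-action $(t_k^*)_{k\in\ZZ}$ on $(\NMM,\tilde{\pi}_{\MM}\tilde{Q})$.

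Next I would prove inline the elementary transfer lemma: if $\pi\colon(\Omega_1,\mu_1)\to(\Omega_2,\mu_2)$ is a Borel map with $\pi\mu_1=\mu_2$ intertwining two $\ZZ$-actions, and the action on $\Omega_1$ is conservative with respect to $\mu_1$, then the action on $\Omega_2$ is conservative with respect to $\mu_2$. Given $A$ with $\mu_2(A)>0$, put $B=\pi^{-1}A$, so $\mu_1(B)=\mu_2(A)>0$; by conservativity there is $k\neq 0$ with $\mu_1(B\cap T_kB)>0$. Because each $T_k$ is a bijection (it is, being part of a $\ZZ$-action, by Lemma~\ref{lem:action}) and $\pi\circ T_k=t_k^*\circ\pi$, one has the set identity $T_k(\pi^{-1}A)=\pi^{-1}(t_k^*A)$, whence $B\cap T_kB=\pi^{-1}\big(A\cap t_k^*A\big)$; pushing forward gives $\mu_2\big(A\cap t_k^*A\big)\ge\mu_1(B\cap T_kB)>0$. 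Applying this with $\pi=\tilde{\pi}_{\MM}$, $\mu_1=\tilde{Q}$, $\mu_2=\tilde{\pi}_{\MM}\tilde{Q}$ yields the lemma.

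There is essentially no real obstacle here; the only point requiring a little care is the set-theoretic identity $T_k(\pi^{-1}A)=\pi^{-1}(t_k^*A)$, which genuinely uses that $T_k$ is invertible together with the intertwining relation — without invertibility only one inclusion would hold and the transfer could fail.
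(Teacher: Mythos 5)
Your proof is correct and follows essentially the same route as the paper: pull $A$ back to $\tilde{\pi}_{\MM}^{-1}A\subseteq\tilde{X}$, apply conservativity of $(T_k)_{k\in\ZZ}$ there, and use the intertwining $\tilde{\pi}_{\MM}\circ T_k=t_k^*\circ\tilde{\pi}_{\MM}$ to push the recurrent intersection back down to $\NMM$. The only difference is cosmetic: you spell out the set identity $T_k(\tilde{\pi}_{\MM}^{-1}A)=\tilde{\pi}_{\MM}^{-1}(t_k^*A)$ as a general transfer lemma, whereas the paper asserts the resulting equality of measures directly.
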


The conclusion of this Section is that
\begin{proposition}\label{prop:conclusion}
Let $\Pext$ be a non-deterministic $\Mext$ invariant and ergodic probability distribution. If the action of $G$ on $X$ is conservative with respect to $\pi_{-}\Phi^{-1} \Pext $, then the action of $\ZZ$ on $\NMM$ is conservative with respect to $\hat{\nu} \Pext$.
\end{proposition}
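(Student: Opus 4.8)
The plan is to read the proposition off the commutative diagram of this section, by chaining Lemma~\ref{lem:conservativityequivalence} with Lemma~\ref{lem:factorconservativity} and inserting the two intertwining identities, Equation~\ref{eq:PsiTheta} and Equation~\ref{eq:ProjectionIntertwining}. First I would set $Q=\pi_{-}\Phi^{-1}\Pext$, the distribution on $X$ appearing in the hypothesis, and check that it is a bona fide probability distribution: since $\Pext$ is non-deterministic it is carried by $\PlainXZtilde$, and the isomorphism lemma above exhibits $\Phi$ as a measurable bijection from $\PlainXZ$ onto $\PlainXZtilde$, so $\Phi^{-1}\Pext$ is a probability distribution on $\PlainXZ$ and $\pi_{-}$ pushes it to one on $X$. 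By assumption, the action of $G$ on $X$ is conservative with respect to $Q$.

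Next I would run the chain forward. Lemma~\ref{lem:conservativityequivalence} turns the hypothesis into: the action of $\ZZ$ on $\tilde{X}$ is conservative with respect to $\theta Q$. I then identify this distribution: applying the operator identity $\tilde{\pi}_{-}\Phi=\theta\pi_{-}$ of Equation~\ref{eq:PsiTheta} to the measure $\Phi^{-1}\Pext$ yields
$$\theta Q=\theta\pi_{-}\Phi^{-1}\Pext=\tilde{\pi}_{-}\Phi\Phi^{-1}\Pext=\tilde{\pi}_{-}\Pext=\tilde{Q},$$
so the $\ZZ$-action on $\tilde{X}$ is conservative with respect to $\tilde{Q}$. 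Feeding this into Lemma~\ref{lem:factorconservativity} gives that the $\ZZ$-action on $\NMM$ is conservative with respect to $\tilde{\pi}_{\MM}\tilde{Q}$, and finally Equation~\ref{eq:ProjectionIntertwining} identifies
$$\tilde{\pi}_{\MM}\tilde{Q}=\tilde{\pi}_{\MM}\tilde{\pi}_{-}\Pext=\hat{\nu}\Pext,$$
which is exactly the conclusion.

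Since conservativity is a property of the measure class only, and every identification above is an exact equality of pushforward measures, this argument is a pure diagram chase with no estimates involved; the real content of the proposition has already been pushed into the lemmas of this section, which are proved in Appendix~\ref{App:one}. In particular, the substance lives in Lemma~\ref{lem:relation} (matching the $G$-orbits on $X$ with the $\ZZ$-orbits on $\tilde{X}$, with only $a=0$ corresponding to $k=0$), in Lemma~\ref{lem:conservativityequivalence} which converts that orbit correspondence into the equivalence of conservativity, and in the construction of the measure $\nu(\minusSeq{\mu},\minusSeq{i})$ underpinning $\Phi$ and $\theta$ (cf.\ Lemma~\ref{Lemma:lambda}). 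The only step inside the present proof that is not a formality is the well-definedness of $\Phi^{-1}\Pext$, and that is precisely where non-determinism of $\Pext$ is used; I expect this to be the one place a reader pauses, rather than anything in the diagram chase itself.
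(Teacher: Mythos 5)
Your proposal is correct and follows exactly the paper's own argument: chain Lemma~\ref{lem:conservativityequivalence} and Lemma~\ref{lem:factorconservativity}, identifying the pushed-forward distributions via Equation~\ref{eq:PsiTheta} and Equation~\ref{eq:ProjectionIntertwining}, so that the content is delegated to the lemmas of this section. No issues.
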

This follows immediately from our discussion up to now, since if the action of $G$ on $X$ is conservative with respect to $\pi_{-}\Phi^{-1} \Pext $, then  it follows from Lemma~\ref{lem:conservativityequivalence} that the action of  $\ZZ $ on $\tilde{X}$ is conservative with respect to
$$\theta\pi_{-}\Phi^{-1} \Pext =^{Eq. \ref{eq:PsiTheta}}\tilde{\pi}_{-}\Phi \Phi^{-1} \Pext=\tilde{\pi}_{-} \Pext $$
Which, by Lemma~\ref{lem:factorconservativity} implies that the action of $\ZZ$ on $\NMM$ is conservative with respect to
$$\tilde{\pi}_{\MM} \tilde{\pi}_{-} \Pext=^{Eq. \ref{eq:ProjectionIntertwining}} \hat{\nu} \Pext $$
Therefore, for the proof of Theorem~\ref{th:conservative} it is sufficient to prove
 \begin{proposition}\label{prop:MainLemma}
 Let $Q$ be a non-deterministic ECPD on $X$. Then the action of $G$ on $X$ is  conservative with respect to $Q$.
 \end{proposition}

 The proof of this Proposition is the purpose of the next section.

\subsection{Example}\label{sub:example}
The following is an example of a Chain ECPS, and  the construction of an Extended ECPS from it. We will also use this example in Section~\ref{sec:ergodic}.

\textbf{Chain ECPS.}
         $\lambda_C=(\frac{1}{2}\delta_0+\frac{1}{2}\delta_2))^{\ZZ} $ is a shift-invariant and ergodic measure  on $\Lambda_3^{\ZZ} $.
        Let $\mu_C $ be the standard Cantor measure on $[0,1)$ defined by the property that for every $n>0$ and every $i_1,i_2,\ldots,i_n \in \Lambda_3^n$,
        $$\mu_C[i_1,\ldots,i_n \bpn =\twopartdefotherwise{2^{-n}}{\text{ for all } 1 \leq j \leq n, \; i_j \in \{0,2\}}{0} $$
        We note that for $i=0,2$, $\mu_C^i=\mu_C$, and using this it can be shown that the map
        $$(i_n)_{n \in \ZZ} \mapsto (\mu_n=\mu_C,i_n)_{n \in \ZZ}$$
        defines an isomorphism between $(\Lambda_3^{\ZZ},\Shift,\lambda_C) $ and $(X,\Shift,\delta_{\mu_C}\times \lambda_C) $, and thus
    the Chain distribution $\delta_{\mu_C}\times \lambda_C$ is shift invariant and ergodic. It is also adapted  since
    $$\PP_{\delta_{\mu_C}\times \lambda_C}\left( \hat{i}_1=j| (\hat{\mu}_n,\hat{i}_n)_{n \leq 0}\right)(\fullSeq{\mu},\fullSeq{i})=\lambda_C\left(\{\hat{i}_1=j   \}  \right)= \mu_0[j \bp $$

\textbf{Construction of Extended ECPS.}
        Let $(\fullSeq{\mu},\fullSeq{i})$ be a `typical point', i.e. a point with $\mu_n=\mu_C $ and $i_n \in \{0,2\} $ for every $n \in \ZZ $.  Let us examine  the first stages of the construction in the case where $i_0=0$ and $i_{-1}=2$. Let $\minusSeq{I}$ be the sequence of intervals which is well based and compatible with $\minusSeq{i}$.

        Since $\minusSeq{I}$ is well based, $I_0=[0,1)$  and $\tilde{\mu}_0=\mu_C$.

        Since $i_0=0$, $I_{-1}=[0,3)$ and $$\tilde{\mu}_{-1}=N\rho_{I_0}^{I_{-1}}\mu_{-1}=\mu_C+t_{-2}\mu_C$$
        Since $i_{-1}=2$, $I_{-2}=[-6,3)]$ and $$\tilde{\mu}_{-2}=(\mu_C+t_{-2}\mu_C)+t_6(\mu_C+t_{-2}\mu_C)$$
        Note that $\tilde{\mu}_{-2}\restrict_{I_{-1}}=\tilde{\mu}_{-1}\restrict_{I_{-1}}$ and $\tilde{\mu}_{-1}\restrict_{I_{0}}=\tilde{\mu}_{0}\restrict_{I_{0}}$.

        Continuing in this fashion we will obtain $\tilde{\mu}_n$ which are sums of Cantor measures translated by appropriate integers. We note that measures $\tilde{\mu}_n $ constructed in this fashion will always have $\tilde{\mu}_n[1,2)=0 $. We will use this fact in Section~\ref{sec:ergodic}.

\section{Conservativity}\label{sec:conserv}
In this Section we prove Proposition~\ref{prop:MainLemma}, which will conclude the proof of Theorem~\ref{th:conservative}, as we discussed in the previous Section. As mentioned above, our  proof is an adaptation of a proof of a Lemma from \cite{host}.

\subsection{Conservativity for Increasing Finite Groups of Transformations}
We begin with a general discussion of conservativity for increasing finite groups of transformations.

Let $(Y,\BB,P)$ be a probability space, and for any $n\geq 0$, let $G_n$ be finite groups of measurable maps from $Y$ to $Y$, such that $G_n \subseteq G_{n+1}$.

Additionally, assume that for every $n$, there is a measurable finite partition $\partition^{(n)}=\{R_i^{(n)}\}_{i\leq N_n}$ such that for every $Id \neq T \in G_n$ and $i \leq N_n$, $T(R_i^{(n)})\cap R_i^{(n)}=\emptyset$.

Define measures $P_n=\sum_{T \in G_n} TP$. Clearly for any $A \in \BB$, $P(A)\leq P_0(A) \leq P_{1}(A) \ldots$ and therefore we have Radon-Nikodym derivatives $\phi_n=\frac{dP}{dP_n}$ which are non-increasing $P$ almost everywhere. Additionally $0\leq \phi_n \leq 1$.

Finally, define $\AAA_n$ to be the $\sigma$ algebra of sets invariant under all $T \in G_n$.

\begin{lemma} \label{lem:host}
\begin{enumerate}
\item $P$ is conservative under $G$ if and only if $\phi_n \longrightarrow 0$ $P$ a.e.
\item $\EE_P(f | \AAA_n)(y)= \sum_{T \in G_n}f(Ty)\phi_n(Ty)$, in particular for almost every  $y \in R_i^{(n)}$, \\  $\phi_n(y)=\PP_P(R_i^{(n)} | \AAA_n)(y)$.
\end{enumerate}
\end{lemma}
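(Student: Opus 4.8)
My overall strategy is to treat this as a statement about the increasing sequence of measures $P_n$ and the Radon--Nikodym derivatives $\phi_n = dP/dP_n$, using the partition hypothesis to control how the group $G_n$ moves sets around. I would first establish part (2), since the identity for conditional expectations is the computational core, and then deduce part (1) from it together with a standard conservativity criterion.

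\textbf{Part (2).} The plan is to verify the defining property of conditional expectation directly: for every $\AAA_n$-measurable bounded $g$, I want $\int f g \, dP = \int \big(\sum_{T \in G_n} f(Ty)\phi_n(Ty)\big) g(y)\, dP(y)$. First I would rewrite the left-hand side using $dP = \phi_n \, dP_n$ and the definition $P_n = \sum_{S \in G_n} SP$, so that $\int fg\, dP = \sum_{S \in G_n} \int (fg\phi_n)\circ S^{-1}\, dP$ — here I need that each $T \in G_n$ is invertible on $Y$ as a group element and that the pushforward/change-of-variables formula applies; since $g$ is $\AAA_n$-measurable, $g\circ S^{-1} = g$. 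Reindexing the sum by $T = S^{-1}$ (using that $G_n$ is a group) gives $\sum_{T\in G_n}\int (f\phi_n)(Ty) g(y)\, dP(y)$, which is exactly what is wanted. The function $y \mapsto \sum_{T\in G_n} f(Ty)\phi_n(Ty)$ is $\AAA_n$-measurable because it is manifestly invariant under precomposition with any element of $G_n$ (the sum just gets reindexed). For the ``in particular'' clause: apply the formula with $f = 1_{R_i^{(n)}}$. The partition hypothesis says $T R_i^{(n)} \cap R_i^{(n)} = \emptyset$ for $T \neq Id$, so for $y \in R_i^{(n)}$ the only surviving term in $\sum_T 1_{R_i^{(n)}}(Ty)\phi_n(Ty)$ is $T = Id$, giving $\phi_n(y)$; hence $\PP_P(R_i^{(n)}\mid \AAA_n)(y) = \phi_n(y)$ for a.e.\ $y \in R_i^{(n)}$.

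\textbf{Part (1).} Since the $\phi_n$ are non-increasing and in $[0,1]$, the limit $\phi_\infty = \lim_n \phi_n$ exists $P$-a.e. I would argue both directions. Suppose $P$ is \emph{not} conservative under $G$: then there is a wandering set $A$ with $P(A) > 0$, i.e.\ the sets $\{gA\}_{g\in G}$ are pairwise disjoint. Then for every $n$ and every $g \in G_n$, $gA \cap A = \emptyset$, so on $A$ we have $\phi_n = dP/dP_n$ with $P_n(A) = \sum_{T\in G_n} P(T^{-1}A) \geq \sum_{T\in G_n}$ over a growing collection of disjoint sets; more precisely I would show $\int_A \phi_n\, dP = P(A \cap \text{something})$ stays bounded while $|G_n|\to\infty$ along the groups forces $\phi_n \to 0$ on a positive-measure subset — the cleaner route is: $\int_A \phi_n \, dP_n = P(A)$, and since the $T^{-1}A$ for $T\in G_n$ are disjoint, $\int 1_A \phi_n\, dP \le \ldots$, leading to $\sum_n$-type summability forcing $\phi_\infty = 0$ a.e.\ on $A$; combined with the dichotomy this is incompatible with $\phi_\infty > 0$ on a conservative part. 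Conversely, if $\phi_n \to 0$ $P$-a.e., I would show conservativity: given $A$ with $P(A)>0$, if $A$ were wandering then by part (2), $\phi_n \geq \PP_P(A \mid \AAA_n)$ on $A$ up to refining $A$ inside a single partition cell $R_i^{(n)}$, and the martingale / reverse-martingale convergence of $\PP_P(A\mid\AAA_n)$ would contradict $\phi_n \to 0$ since $\PP_P(A\mid \AAA_n) \to 1_A$-ish on a positive measure set. The cleanest formulation uses Hopf's decomposition: $\phi_\infty$ is (up to normalization) the indicator-like density of the dissipative part, so $\phi_n \to 0$ a.e.\ is equivalent to the dissipative part being null, which is exactly conservativity.

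\textbf{Main obstacle.} The routine measure-algebra in part (2) (change of variables, reindexing a group sum) is bookkeeping, so the real subtlety is part (1): making rigorous the link between ``$\phi_n \to 0$'' and the Hopf conservative/dissipative dichotomy for the \emph{action of the whole group $G = \bigcup_n G_n$} rather than each finite $G_n$ separately. The key technical point I would need to nail down is that a wandering set for $G$ forces, via the disjointness of its translates and the identity $\int_A \phi_n \, dP_n = P(A)$, that $\int_A \phi_\infty \, dP = 0$; and conversely that $\phi_\infty = 0$ a.e.\ precludes any non-null wandering set, using part (2) to identify $\phi_n$ on a partition cell with a conditional probability and then invoking the reverse-martingale convergence theorem for the decreasing (in a suitable sense) data $\AAA_n$ — care is needed because $\AAA_n$ need not be monotone in $n$, so I would instead pass to $\AAA_\infty = \bigcap \sigma(\AAA_n, \AAA_{n+1},\dots)$ or argue directly with the $P_n$-martingale $\phi_n$, which \emph{is} monotone, and extract conservativity from $\lim \phi_n = 0$ via the standard fact that $\sum_{T\in G} 1_A(Ty) = \infty$ a.e.\ on $A$ iff $\int_A \phi_\infty \, dP = 0$.
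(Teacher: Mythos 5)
Your part (2) is correct and essentially identical to the paper's argument: verify the defining property of conditional expectation against $\AAA_n$-measurable test functions, using $dP=\phi_n\,dP_n$, the definition of $P_n$ as a sum of pushforwards, the $G_n$-invariance of the test function, and a reindexing of the group sum; the cell identity follows by taking $f=1_{R_i^{(n)}}$ and using the disjointness hypothesis. In part (1), your worry about monotonicity is unfounded: since $G_n\subseteq G_{n+1}$ the invariant $\sigma$-algebras satisfy $\AAA_{n+1}\subseteq\AAA_n$, so reverse martingale convergence is available, and your argument for the implication ``$\phi_n\to 0$ a.e.\ $\Rightarrow$ conservative'' can be made to work: on a wandering set $A$ only $T=Id$ survives in the sum from part (2), so $\PP_P(A\mid\AAA_n)=\phi_n$ on $A$, while the reverse-martingale limit $\EE_P(1_A\mid\bigcap_n\AAA_n)$ is positive a.e.\ on $A$. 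That is a legitimate alternative to the paper's route, which instead uses $\int_A\phi_n^{-1}\,dP=P_n(A\cap K)\le 1$ together with monotone convergence.

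The genuine gap is the other direction, ``conservative $\Rightarrow\phi_n\to 0$ a.e.'', where your sketch is both backwards and missing the key device. On a wandering set $A$ the disjointness of the translates gives $P_n(A)=\sum_{T\in G_n}P(T^{-1}A)\le 1$, i.e.\ $\int_A\phi_n^{-1}\,dP$ stays bounded, and monotone convergence then forces $\lim_n\phi_n>0$ a.e.\ on $A$ --- the opposite of your claim that wandering forces ``$\phi_\infty=0$ a.e.\ on $A$'' (a claim that also contradicts your own Hopf heuristic identifying $\phi_\infty$ with the density of the dissipative part). Moreover, the Hopf conservative/dissipative decomposition cannot simply be invoked: the maps in $G$ are not assumed nonsingular for $P$, and in the paper's application they need not be. What is actually required, and what your plan never supplies, is a construction of a $G$-wandering set of positive measure from the assumption that $\phi_n\not\to 0$ on a set of positive measure. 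The paper does this as follows: by monotonicity of $\phi_n$ there are $C$ with $P(C)>0$ and $\epsilon>0$ such that $\phi_n>\epsilon$ on $C$ for all $n$; then $\int\sum_{T\in G_n}1_C(Tx)\,dP(x)=P_n(C)=\int_C\phi_n^{-1}\,dP\le P(C)/\epsilon$, so the increasing, integer-valued counting functions stabilize, giving an $n$ and a $B\subseteq C$ of positive measure with $1_C(Tx)=0$ for all $T\in G\setminus G_n$ and $x\in B$; finally, intersecting $B$ with a single cell $R_i^{(n)}$ of the given partition produces a positive-measure set all of whose nontrivial $G$-translates miss it, contradicting conservativity. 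This is the only place in part (1) where the partition hypothesis enters, and nothing in your proposal plays that role.
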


\begin{proof}
\begin{enumerate}
%

\item Note that for every $n$, $\phi_n>0$ $P$ a.e. Define   $K=\{x: \; \forall n, \; \phi_n(x)>0\}$,\\
  then $P(K)=1$ and $1_K dP_n=\phi_n^{-1}dP$.

    Suppose that $\phi_n$ doesn't tend to zero $P$ a.e., then there exists a $C\subseteq K$ with $P(C)>0$, and $\epsilon>0$, such that for every $n$, $\phi_n>\epsilon$ on $C$. Therefore for every $n$
$$\frac{P(C)}{\epsilon} \geq \int_C \phi_n^{-1} dP = P_n(C)= \int \sum_{T\in G_n} 1_C(Tx) dP(x)$$
The sequence $\sum_{T\in G_n} 1_C(Tx)$ is therefore bounded a.e. , and since it only accepts integer values, it follows easily that there is an $n$ and  $B\subseteq C$ of positive measure s.t. $1_C(Tx)=0$ for every $ T \in G \setminus G_n$ and $x \in B$.

For this $n$, there is an $i\leq N_n$ such that $A=B\cap R_i^{(n)}$ has positive measure, and since for every $T \in G_n \setminus\{ 1_G\}$, $R_i^{(n)} \cap T R_i^{(n)}=\emptyset$, we have $\forall T \neq Id, \; A\cap TA =\emptyset$, and so the action of $G$ isn't conservative with respect to $P$.

Conversely, suppose there is a $B$ of positive measure with
\begin{equation}\label{eq:nullintersect}\forall T \neq Id, \; P(B \cap TB) =0 \end{equation}
Since $P(K)=1$, $\tilde{B}=B \cap K$ has positive measure while \eqref{eq:nullintersect} still holds when we replace $B$ by $\tilde{B}$. The set $A=\tilde{B} \setminus \cup_{T \neq Id} T\tilde{B}$ is then a set of positive measure contained in $K$ with
$$\forall T \neq Id \; A \cap TA =\emptyset$$
 For all $n$,
$$1\geq P(\cup_{T \in G_n}TA)=\sum_{T \in G_n} P(TA)= P_n(A)=\int_A \phi_n^{-1} dP$$
and therefore necessarily for $P$ almost every $x$, $\phi_n(x) \not \rightarrow 0$.
\item $F_f=\sum_{T \in G_n}f(Ty)\phi_n(Ty)$ is clearly invariant under any $T \in G_n$ and therefore $F_f \in L^1(\AAA_n)$. Additionally, for any $A \in \AAA_n$
    \begin{align} \label{eq:local}
    \int 1_A(y)F_f(y)dP(y)&=\int 1_A(y)\sum_{T \in G_n}f(Ty)\phi_n(Ty)dP(y)\\
    &= \int \sum_{T \in G_n} 1_A(Ty)f(Ty)\phi_n(Ty)dP(y) \nonumber\\
    &=\int 1_A(y)f(y)\phi_n(y)dP_n(y)
    =\int 1_A(y)f(y)dP(y) \nonumber
    \end{align}
which proves that indeed $\EE(f | \AAA_n)=F_f$.
\end{enumerate}
\end{proof}

\subsection{Proof of Proposition~\ref{prop:MainLemma}}
We now use the general discussion from the previous Subsection, for the action of the increasing groups $G_n$ on $X$ defined in Subsection~\ref{sub:translationOp} (Note that increasing here is in the sense $G_{-1}\subseteq G_{-2}\subseteq \ldots $).
For every $b=(\ldots,0,0,b_n,b_{n+1},\ldots,b_0) \in G_n $ we define
$$R_b=\{\minusSeq{i}: \;  \hat{i}_n(\minusSeq{i}),\ldots,\hat{i}_0(\minusSeq{i})=b_n,\ldots,b_0\} $$
and
$\partition^{(n)}=\{R_b\}_{b \in G_n} $
This is a finite partition of $X$, and for every $b \in G_n$ and $a \in G_n \setminus \{0\} $,
$$S_a R_b \cap R_b= \emptyset$$
Therefore, according to Lemma~\ref{lem:host}, for every $n \leq 0$ and $\Pminus$ almost every $(\minusSeq{\mu},\minusSeq{i})$, $\phi_n=\frac{d\Pminus}{d\sum_{a \in G_n}S_a\Pminus}$ is given by
$$\phi_n(\minusSeq{\mu},\minusSeq{i})=P_{\Pminus}(\hat{i}_n,\ldots,\hat{i}_0=i_n,\ldots,i_0| \AAA_n)(\minusSeq{\mu},\minusSeq{i}) $$
where $\AAA_n$ is the $\sigma$-algebra invariant under $G_n$. We note that $\AAA_n=(\RightShift)^{-(|n|+1)}\BB $
and therefore we may also write
\begin{align}\label{eq:phin}
\phi_n(\minusSeq{\mu},\minusSeq{i})&=\PP_{\Pminus}\left(\hat{i}_n,\ldots,\hat{i}_0=i_n,\ldots,i_0|(\hat{\mu}_l,\hat{i}_l)_{l <n}\right)(\minusSeq{\mu},\minusSeq{i}) \\
&=\prod_{j=n}^{0} \PP_{\Pminus}\left(\hat{i}_j=i_j|\hat{i}_n,\ldots,\hat{i}_{j-1},(\hat{\mu}_l,\hat{i}_l)_{l <n}\right)(\minusSeq{\mu},\minusSeq{i})  \nonumber \\
&\overset{(*)}=\prod_{j=n}^{0}\PP_{\Pminus}\left(\hat{i}_j=i_j|(\hat{\mu}_l,\hat{i}_l)_{l <j}\right)(\minusSeq{\mu},\minusSeq{i}) \nonumber \\
&\overset{(**)}=\prod_{j=n}^{0} \phi_0 \circ \Shift^{|j|}_{-} (\minusSeq{\mu},\minusSeq{i}) \nonumber
\end{align}

Where $(*)$ follows from the fact that if $\hat{\mu}_{n-1}=\mu_{n-1}$ and $(\hat{i}_n, \ldots,\hat{i}_0)=(i_n,\ldots,i_0) $ then necessarily $$\hat{\mu}_n=\mu_{n-1}^{i_n}=\mu_n, \; \hat{\mu}_{n+1}=\mu_n^{i_{n+1}}=\mu_{n+1}, \; \text{etc.} $$
and $(**)$ follows from the fact that $\Pminus$ is invariant under the right shift.

According to the Ergodic Theorem, for almost every $(\minusSeq{\mu},\minusSeq{i}) $,
$$-\lim_{n \rightarrow -\infty}\frac{1}{|n|}\log \phi_n (\minusSeq{\mu},\minusSeq{i})=-\lim \frac{1}{|n|}\sum_{j=0}^{|n|} \log \phi_0 \circ \Shift^j_{-} (\minusSeq{\mu},\minusSeq{i})=\int -\log \phi_0 d\Pminus $$
We recall that $0 \leq \phi_0 \leq 1$. Also note that there is a set of positive measure on which $\phi_0 <1$ since $Q $ is non-deterministic. Thus $-\int \log \phi_0 d\Pminus>0$, and therefore the set of points $(\minusSeq{\mu},\minusSeq{i})$ for which $\phi_n(\minusSeq{\mu},\minusSeq{i})\rightarrow 0$ is a null set. It now follows from Lemma~\ref{lem:host} that the action of $G$ is conservative with respect to $Q$, thus completing   the proof of Proposition~\ref{prop:MainLemma}. \qedsymbol

\section{Ergodic Theorems} \label{sec:ergodic}
To prove Ergodic theorems for the translation maps described in Section~\ref{sec:conserv}, we will use Hurewicz's Ergodic Theorem and the Chacon-Ornstein Lemma for non-singular transformations. We give a brief summary of what we will need from the theory of non-singular transformations, for reference and more details see \cite{Aaronson}.

\subsection{Non-Singular Transformations}
Let $(X,\BB,\mu)$ be a probability space, and $T:X \to X$ a measurable invertible transformation.

\begin{definition}\label{def:NonSingTrans}
We say that $T$ is non-singular if $T\mu \sim \mu $.
\end{definition}
We note that $T$ is non-singular if and only if $T^{-1}$ is non-singular.
 A non singular-transformation $T$ induces an isometry $U_T:L^1(\mu) \to L^1(\mu)$ through
$$U_T(f)=\frac{dT^{-1}\mu}{d\mu} \cdot f \circ T $$
We say that $T$ is conservative if the $\ZZ$ action defined by $\{T^n: \; n \in \ZZ\} $ is conservative in the sense of Definition~\ref{def:cons}.

We can now state Hurewicz's Ergodic Theorem:
\begin{theorem}[Hurewicz] \label{the:Hur}
Let $(X,\BB,\mu)$ be a probability space, and $T$ a conservative, non-singular transformation. Then for every $f,p \in L^1(\mu) $ with $p>0$
$$\frac{\sum_{k=0}^{n}U_T^k f(x)}{\sum_{k=0}^{n}U_T^k p(x)} \rightarrow \EE_{\mu_p}(\frac{f}{p}|\JJ) $$
for a.e. $x\in X$, where $d\mu_p=pd\mu$ and $\JJ$ is the $\sigma$-algebra invariant under $T$.
\end{theorem}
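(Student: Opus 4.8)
The statement is the Chacon--Ornstein ratio ergodic theorem specialised to the Markov operator $U_T$, so the plan is to (i) set up $U_T$ operator-theoretically, (ii) prove pointwise convergence of the ratio, and (iii) identify the limit. For (i), I would check from $U_T f=\frac{dT^{-1}\mu}{d\mu}\cdot(f\circ T)$ that $U_T$ is linear, positive, and preserves $\int\cdot\,d\mu$ on nonnegative functions (by the change of variables formula), hence is an $L^1$-contraction of Dunford--Schwartz type whose $L^\infty$-adjoint acts by $u\mapsto u\circ T^{-1}$; in particular $U_T^*u=u$ for every $T$-invariant $u$. That $T$ is conservative means, in the Hopf decomposition of $U_T$, that all of $X$ is the conservative part of $U_T$; together with $p>0$ this forces $S_np:=\sum_{k=0}^n U_T^k p\ge p>0$ and $\sum_{k\ge 0}U_T^k p=\infty$ $\mu$-a.e., so $R_n:=S_nf/S_np$ is everywhere defined and $S_np(x)\to\infty$ a.e.

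Step (ii) is the analytic core and the step I expect to be the main obstacle: showing $R_n\to h$ $\mu$-a.e. for some finite measurable $h$. I would derive this from Hopf's maximal ergodic inequality for $U_T$ together with the Chacon--Ornstein filling scheme: for fixed rationals $\alpha<\beta$, applying the maximal inequality to $f-\beta p$ and to $\alpha p-f$ shows $\{\liminf R_n<\alpha<\beta<\limsup R_n\}$ is null, and intersecting over all such pairs gives a.e.\ convergence, finiteness of $h$ following from one further application. This is exactly the Chacon--Ornstein theorem, which may instead be quoted from \cite{Aaronson}.

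For (iii) I would first show $h$ is $T$-invariant. Iterating $U_Tg(x)=\frac{dT^{-1}\mu}{d\mu}(x)\cdot g(Tx)$ yields the cocycle identity $\frac{dT^{-1}\mu}{d\mu}(x)\cdot S_ng(Tx)=S_{n+1}g(x)-g(x)$ for $g=f$ and $g=p$, so that, using $S_{n+1}p(x)\to\infty$,
$$ R_n(Tx)=\frac{S_{n+1}f(x)-f(x)}{S_{n+1}p(x)-p(x)}\xrightarrow[n\to\infty]{}h(x). $$
Since $T$ is non-singular the set of convergence is $T$-invariant modulo null sets, so also $R_n(Tx)\to h(Tx)$, whence $h\circ T=h$ $\mu$-a.e.; in particular $h$ is $\JJ$-measurable.

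It then remains to identify $h=\EE_{\mu_p}(f/p\mid\JJ)$, which by uniqueness of conditional expectation reduces to verifying $\int_A h\,d\mu_p=\int_A f\,d\mu$ for every $A\in\JJ$ (the right-hand side equals $\int_A (f/p)\,d\mu_p$). I would reduce to $0\le f\le Mp$, so $h\in[0,M]$; then $U_T^*1_A=1_A$ and $U_T^*(h1_A)=h1_A$ (both bounded and $T$-invariant) give $\int_A S_nf\,d\mu=(n+1)\int_A f\,d\mu$ and $\int_A hS_np\,d\mu=(n+1)\int_A hp\,d\mu$, while the Chacon--Ornstein lemma (again from \cite{Aaronson}) gives $\|S_nf-hS_np\|_{L^1(\mu)}=o(n)$. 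Subtracting and dividing by $n+1$ yields $\int_A f\,d\mu=\int_A hp\,d\mu=\int_A h\,d\mu_p$; the general case follows by splitting $f=f^+-f^-$ and letting $M\to\infty$. The genuine work is thus concentrated in the maximal-inequality/filling-scheme argument of (ii) and in the $L^1$ control furnished by the Chacon--Ornstein lemma that lets one pass to the limit inside the integrals in (iii).
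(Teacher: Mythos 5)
First, a point of comparison: the paper does not prove Theorem~\ref{the:Hur} at all — it is quoted as a classical result, with \cite{Aaronson} given for the proof — so there is no internal argument to measure you against; what follows assesses your outline on its own terms.

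Your steps (i)–(ii), and the invariance of the limit in (iii), follow the standard route and are sound: $U_T$ is a positive integral-preserving $L^1$-isometry with $U_T^*u=u\circ T^{-1}$; conservativity together with $p>0$ gives $\sum_{k\ge0}U_T^kp=\infty$ a.e.; the Hopf maximal inequality/filling-scheme argument yields a.e.\ convergence of the ratios; and the cocycle identity correctly shows the limit $h$ is $T$-invariant. The genuine gap is in the final identification. The estimate $\|S_nf-hS_np\|_{L^1(\mu)}=o(n)$ (where $S_ng=\sum_{k=0}^nU_T^kg$) is not what the Chacon--Ornstein lemma asserts: Lemma~\ref{lem:ChacOrnst} is the pointwise statement $U_T^nf/S_np\to0$ a.e., and it gives no $L^1$ control of $S_nf-hS_np$. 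Worse, the bound you invoke is essentially equivalent to the statement you want to deduce from it. Since $h$ is $T$-invariant, $hS_np=S_n(hp)$, and $\frac1n\|S_n(f-hp)\|_{L^1(\mu)}\to0$ holds precisely when $f-hp$ lies in the norm closure of $(I-U_T)L^1$; by duality $\overline{(I-U_T)L^1}={}^\perp\mathrm{Fix}(U_T^*)$ and $\mathrm{Fix}(U_T^*)$ consists of the bounded $T$-invariant functions, so this membership is the same as $\int_A(f-hp)\,d\mu=0$ for every $A\in\JJ$ — which is exactly the identity $h=\EE_{\mu_p}(f/p|\JJ)$ you are trying to establish. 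As written, step (iii) therefore assumes what it must prove; nothing you have established earlier (pointwise convergence of ratios) yields the $o(n)$ bound directly, because $\frac1nS_np$ is not pointwise bounded.

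The standard repair is to identify the limit with the same tool you used for convergence: for $A\in\JJ$ and rational $c$, the invariant set $A\cap\{h>c\}$ reduces $U_T$, and on it $\sup_nS_n(f-cp)>0$, so Hopf's maximal ergodic lemma localized to that set gives $\int_{A\cap\{h>c\}}(f-cp)\,d\mu\ge0$; the symmetric inequality for $cp-f$ on $A\cap\{h<c\}$, letting $c$ run over the rationals, together with a truncation and monotone convergence step, gives $\int_Af\,d\mu=\int_Ahp\,d\mu$ for all $A\in\JJ$, i.e.\ $h=\EE_{\mu_p}(f/p|\JJ)$. Alternatively, do what the paper does and cite the full statement, including the identification of the limit, from \cite{Aaronson}.
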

We will also use the Chacon-Ornstein Lemma
\begin{lemma}[Chacon-Ornstein]\label{lem:ChacOrnst}
For every $f,p \in L^1(\mu) $ with $p>0$
$$\frac{U_T^nf(x)}{\sum_{k=0}^n U_T^kp(x)} \rightarrow 0 $$
for a.e. $x\in X$.
\end{lemma}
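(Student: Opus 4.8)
The plan is to deduce this directly from Hurewicz's Ergodic Theorem (Theorem~\ref{the:Hur}), which is already available to us. Write $s_n=\sum_{k=0}^{n}U_T^k p$. Since $p>0$ and $T$ is conservative, $s_n\uparrow\infty$ $\mu$-a.e.; this is the standard characterization of conservativity via the Hopf decomposition, see \cite{Aaronson}. For any $h\in L^1(\mu)$ one has $h/p\in L^1(\mu_p)$ with $\|h/p\|_{L^1(\mu_p)}=\|h\|_1$, so Theorem~\ref{the:Hur} applied with the fixed positive function $p$ gives $s_n^{-1}\sum_{k=0}^{n}U_T^k h\to \EE_{\mu_p}(h/p\,|\,\JJ)$ $\mu$-a.e., the limit being finite a.e.

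First I would handle the special case $f=p$, i.e. show $U_T^n p/s_n\to 0$. Apply the above with $h=U_T p\in L^1$: since $\sum_{k=0}^{n}U_T^k(U_Tp)=s_{n+1}-p$ and $p/s_n\to 0$, we get $s_{n+1}/s_n\to c:=\EE_{\mu_p}(U_Tp/p\,|\,\JJ)$ a.e. The crux is that $c=1$ a.e. On the one hand $c\ge 1$ a.e., for otherwise $s_n$ would decay geometrically on a set of positive measure, contradicting both $s_n\uparrow\infty$ and $s_n\ge p>0$. On the other hand, because $U_T$ preserves the integral on $L^1_+$, $\int c\,d\mu_p=\int (U_Tp/p)\,d\mu_p=\int U_Tp\,d\mu=\int p\,d\mu=\mu_p(X)$, so $\int (c-1)\,d\mu_p=0$ with $c-1\ge 0$, forcing $c=1$ $\mu_p$-a.e., hence $\mu$-a.e. (as $\mu_p\sim\mu$). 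Consequently $s_{n-1}/s_n\to 1$, i.e. $U_T^n p/s_n=1-s_{n-1}/s_n\to 0$ a.e.

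For general $f$ I would telescope: with $\sigma_n=\sum_{k=0}^{n}U_T^k f$ we have $U_T^n f=\sigma_n-\sigma_{n-1}$, so
$$\frac{U_T^n f}{s_n}=\frac{\sigma_n}{s_n}-\frac{\sigma_{n-1}}{s_{n-1}}\cdot\frac{s_{n-1}}{s_n}.$$
By Hurewicz both $\sigma_n/s_n$ and $\sigma_{n-1}/s_{n-1}$ converge a.e. to the same finite limit $L=\EE_{\mu_p}(f/p\,|\,\JJ)$, and by the previous step $s_{n-1}/s_n\to 1$; hence the right-hand side tends to $L-L\cdot 1=0$ $\mu$-a.e., which is the claim.

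The main obstacle is the case $f=p$, and within it the identification $c=1$: this is where one must combine the ``no geometric decay'' observation (monotonicity of $s_n$ plus its divergence) with the integral-preservation identity $\int U_Tp\,d\mu=\int p\,d\mu$. I would also double-check that invoking Hurewicz's theorem here is not circular within the development of \cite{Aaronson}; if a fully self-contained route is preferred, the lemma also follows from the classical Chacon--Ornstein filling scheme, but the argument above is shorter given the tools already introduced.
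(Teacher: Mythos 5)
The paper does not actually prove this lemma: like Hurewicz's theorem, it is quoted verbatim as a standard fact from \cite{Aaronson}, so there is no in-paper argument to compare against. Your deduction of it from Theorem~\ref{the:Hur} is correct under the standing hypotheses there (conservative and non-singular, which is exactly the situation in which the lemma is applied in Section~\ref{sec:ergodic}): conservativity gives $s_n=\sum_{k=0}^{n}U_T^k p\uparrow\infty$ a.e.\ by Hopf's criterion; applying Hurewicz to $h=U_Tp$ together with $\sum_{k=0}^{n}U_T^k(U_Tp)=s_{n+1}-p$ and $p/s_n\to 0$ yields $s_{n+1}/s_n\to c=\EE_{\mu_p}\left(U_Tp/p\,|\,\JJ\right)$; the bound $c\ge 1$ is immediate since $s_n$ is nondecreasing, and the identity $\int U_Tf\,d\mu=\int f\,d\mu$ (a direct consequence of the definition of $U_T$) gives $\int c\,d\mu_p=\int p\,d\mu=\mu_p(X)$, forcing $c=1$ a.e.; the telescoping step for general signed $f$ is then routine because the Hurewicz limit is finite a.e. What each route buys: your argument shows the lemma comes for free from the tools the paper already quotes, at the cost of the caveat you flag yourself — in \cite{Aaronson} the logical order is reversed (the Chacon--Ornstein filling scheme is an ingredient in the proofs of the Hopf and Hurewicz ratio theorems), so your derivation is not a substitute for the standard proof in the literature, though within this paper, where Theorem~\ref{the:Hur} is itself a black box, it is perfectly legitimate. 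Two small points to make explicit if you write this up: your proof uses conservativity (both for Hurewicz and for $s_n\to\infty$ a.e.), which is not stated in the lemma itself but is part of the standing assumptions in the paper's application; and the ``geometric decay'' justification of $c\ge 1$ can simply be replaced by the observation that $s_{n+1}/s_n\ge 1$ pointwise.
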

A simple calculation show that $U_T^n$ is given by
\begin{equation}\label{eq:IteratedOperator}
U_T^nf=(f \circ T^n)\left(\frac{dT^{-1}\mu}{d\mu}\right)\left( \frac{dT^{-1}\mu}{d\mu} \circ T \right) \ldots \left(\frac{dT^{-1}\mu}{d\mu} \circ T^{n-1} \right)
\end{equation}
\subsection{Proof of Ergodic Theorems}
We now prove the Ergodic Theorems described in Subsection~\ref{sub:MainResults}. Fix some  non-deterministic Extended ECPD $\Pext$. $\Pext$ induces a distribution $\tilde{Q}=\tilde{\pi}_{-} \Pext $ on $\tilde{X}$, and a distribution $Q=\theta^{-1} \tilde{Q} $ on $X$.

The translation maps $\{T_n\}_{n \in \ZZ}$  are not necessarily non-singular with respect to $\tilde{Q}$ . To see this, we first note that the adaptiveness of $\Ptilde$ implies that $\Ptilde$ and $Q$ give the sets
$$\tilde{Y}=\{(\nu,\minusSeq{i}): \; \nu[0,1) \neq 0 \} $$
$$Y=\{(\minusSeq{\mu},\minusSeq{i}): \; \mu_0 \neq 0  \} $$
full measure.
 In Subsection~\ref{sub:example} we described a chain distribution which gives the set $A=\{(\nu,\minusSeq{i}): \; \nu[1,2)=0\}$ full measure while
$$T_1 A \subseteq (\Tilde{Y})^c $$
 and so $T_1 \Ptilde \perp \Ptilde$. Nonetheless,  the set $\tilde{Y}$ and the map $T_1$ induce a map  $T:\tilde{Y}\to \tilde{Y}$ which is non-singular.
 To define $T$, we recall we defined
$$\tau(\nu)=\min\{n \in \NN: \; t_n\nu [0,1)>0\}=\min\{n \in \NN: \; T_n(\nu,\minusSeq{i})\in \tilde{Y}\}$$
\begin{lemma}
For $\Ptilde$ almost every $\nu$, the set $\{n \in \NN: \; t_n\nu [0,1)>0\}$ is non-empty.
\end{lemma}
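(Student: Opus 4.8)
The plan is to derive this lemma directly from Theorem~\ref{th:conservative}, reducing it to a short observation about how integer translation interacts with the interval $[0,1)$.

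First I would reformulate the claim. Since $\Pext$ is adapted, $\Ptilde$-almost every $(\nu,\minusSeq i)$ has $\nu[0,1)=1$, and for each $n$ one has $t_n\nu[0,1)=\nu[n,n+1)$. Hence the set of $(\nu,\minusSeq i)$ for which $\{n\in\NN:\ t_n\nu[0,1)>0\}$ is empty coincides, up to a $\Ptilde$-null set, with $\tilde{\pi}_{\MM}^{-1}(W)$, where
$$W=\{\nu\in\NMM:\ \nu[0,1)>0\ \text{and}\ \nu[k,k+1)=0\ \text{for all }k\in\NN\}.$$
Since $\tilde{\pi}_{\MM}\Ptilde=\PM\Pext$ by Equation~\ref{eq:ProjectionIntertwining}, it suffices to prove $\PM\Pext(W)=0$.

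The key point is that $W$ is \emph{wandering} for the $\ZZ$-action $\{t^*_k\}_{k\in\ZZ}$ on $\NMM$, i.e.\ $t^*_mW\cap W=\emptyset$ for every $m\neq0$. To see this, note that $t_m\nu\neq0$ whenever $\nu\neq0$, and that $N$ rescales a nonzero measure by a positive factor, so $t^*_m\nu$ and $t_m\nu$ have the same null sets; also $t_m\nu[k,k+1)=\nu[k+m,k+m+1)$. If $m\geq1$ and $\nu\in W$, then $t_m\nu[0,1)=\nu[m,m+1)=0$, so $t^*_m\nu[0,1)=0$ and $t^*_m\nu\notin W$. If $m=-j$ with $j\in\NN$ and $\nu\in W$, then $t_{-j}\nu$ assigns mass $\nu[0,1)>0$ to $[j,j+1)\subseteq[1,\infty)$, so $t^*_m\nu$ carries positive mass in $[1,\infty)$ and again $t^*_m\nu\notin W$. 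Once this is established, conservativity of $\{t^*_k\}$ with respect to $\PM\Pext$ (Theorem~\ref{th:conservative}) immediately forces $\PM\Pext(W)=0$: if $\PM\Pext(W)>0$ there would be some $m\neq0$ with $\PM\Pext(W\cap t^*_mW)>0$, contradicting $W\cap t^*_mW=\emptyset$.

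I do not anticipate a serious obstacle: the substance is carried by Theorem~\ref{th:conservative}, and what remains is the elementary disjointness computation. The only points needing a little care are that the normalization map $N$ does not change which Borel sets carry positive mass (so it is harmless to argue with $t_m\nu$ in place of $t^*_m\nu$), and that the cases $m>0$ and $m<0$ must be treated separately — for $m>0$ the mass in $[0,1)$ is simply lost, while for $m<0$ it is pushed into $[1,\infty)$, and either outcome is incompatible with membership in $W$.
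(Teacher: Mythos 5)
Your proof is correct and is essentially the paper's own argument: the paper also identifies the exceptional set (phrased as $\{\nu:\ g(\nu)=0\}$ with $g(\nu)=\max\{n\in\ZZ:\ \nu[n,n+1)>0\}$, which coincides with your $W$), observes that its $t^*_k$-translates are disjoint from it for $k\neq 0$, and concludes from the conservativity of the $\ZZ$-action (Theorem~\ref{th:conservative}) that it is null. Your case split $m>0$ versus $m<0$ is just an unpacked version of the paper's identity $t^*_k\{g=0\}=\{g=-k\}$, so there is nothing to add.
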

\begin{proof}
Define
$$g(\nu)=\max\{n \in \ZZ: \; \nu[n,n+1)>0 \} $$
Since almost  every $\nu $ gives  $[0,1) $ positive mass, it is sufficient to show that
$$\tilde{\pi}_{\MM}\tilde{Q}(\{\nu: \; g(\nu)=0 \})=0 $$
Indeed, this must be the case as otherwise, since we know the action of $\ZZ$ on $\NMM$ is conservative with respect to $\tilde{\pi}_{\MM}\tilde{Q}$, there is some $k \in \ZZ \setminus \{ 0\}$ with
$$\tilde{\pi}_{\MM}\tilde{Q}(\{\nu: \; g(\nu)=0 \}\cap t^*_k\{\nu: \; g(\nu)=0 \})>0 $$
but $ t^*_k\{\nu: \; g(\nu)=0 \}=\{\nu: \; g(\nu)=-k \} $ and therefore the intersection above is empty, which gives a contradiction.
\end{proof}

We now define
$$T(\nu,\minusSeq{i})=T_{\tau(\nu)}(\nu,\minusSeq{i})$$

Similarly, we define
$$\tau_{-}(\nu)=\min\{n \in \NN: \; t_{-n}\nu [0,1)>0\}=\min \{n \in \NN: \; T_{-n}(\nu,\minusSeq{i})\in \tilde{Y} \} $$
For almost every $\nu$,
$\tau_{-}(\nu)< \infty$ and it can be verified that the inverse of $T$ is given by
 $$T^{-1}(\nu,\minusSeq{i})=T_{-\tau_{-}(\nu)}(\nu,\minusSeq{i})$$

Our proof for the following lemma is a technical computation presented in Appendix~\ref{l:non_sing}:
\begin{lemma}\label{lem:NonSing}
$T$ is a non-singular transformation and $\deriv \equiv \frac{dT^{-1}\Ptilde}{d\Ptilde} $ is given by
$$\deriv(\nu,\minusSeq{i})=\nu[\tau(\nu),\tau(\nu)+1)$$
\end{lemma}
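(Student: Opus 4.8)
The plan is to compute the Radon--Nikodym derivative $\frac{dT^{-1}\Ptilde}{d\Ptilde}$ directly by testing against an arbitrary bounded measurable $f$, and to exploit the adaptedness of $\Ptilde$ (equivalently the strengthened form~\eqref{eq:superAdap} for the chain distribution $Q$) to evaluate the relevant conditional probabilities. First I would recall that $T^{-1}(\nu,\minusSeq{i})=T_{-\tau_{-}(\nu)}(\nu,\minusSeq{i})$, so $\int f\, dT^{-1}\Ptilde = \int f\bigl(T_{-\tau_{-}(\nu)}(\nu,\minusSeq{i})\bigr)\,d\Ptilde$. The key observation is that $\tau$ and $\tau_{-}$ are inverse bookkeeping devices: if $(\omega,\minusSeq{j}) = T_k(\nu,\minusSeq{i})$ lands in $\tilde Y$ with $k=\tau(\nu)$ the first return time, then $\tau_{-}(\omega)=k$ and $T_{-\tau_{-}(\omega)}(\omega,\minusSeq{j}) = (\nu,\minusSeq{i})$. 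So I would partition $\tilde X$ according to the value $n=\tau_{-}(\nu)\in\NN$ (on the full-measure set where it is finite), reindex each piece via $T_n$, and reduce to showing that for each $n$,
$$\int_{\{\tau_{-}=n\}} f(T_{-n}(\nu,\minusSeq{i}))\,d\Ptilde(\nu,\minusSeq{i}) = \int_{\tilde Y} f(\nu,\minusSeq{i})\cdot \nu[\tau(\nu),\tau(\nu)+1)\cdot \mathbf 1_{\{\tau(\nu)=n\}}\,d\Ptilde(\nu,\minusSeq{i}).$$

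The heart of the matter is then to show that pushing $\Ptilde$ forward along $(\nu,\minusSeq{i})\mapsto T_n(\nu,\minusSeq{i})$ restricted to $\{\tau(\nu)=n\}$ produces the measure $\nu[n,n+1)\,d\Ptilde$ restricted to $\{\tau_{-}=n\}$. This is where adaptedness enters. Working on the chain side via the isomorphism $\theta$ (Lemma~\ref{lem:relation} and the diagram), the translation $T_n$ corresponds to some $S_a$ with $a\in G_m$, and the ``new'' measure coordinate $\mu_0$ after applying $S_a$ is obtained from the old sequence by the deterministic relabeling $\eta_m=\eta_{m-1}^{j_m},\ldots,\eta_0=\eta_1^{j_0}$ along the new symbols $\minusSeq j = \minusSeq i + a$. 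The point is that the symbols $(j_{m},\ldots,j_0)$ indexing the interval $[n,n+1)$ inside $I_m$ occur under $Q$ with conditional probability exactly $\mu_m[j_m,\ldots,j_0)_{p^{|m|}} $ given $(\hat\mu_l,\hat i_l)_{l<m}$, by~\eqref{eq:superAdap}, and this product of transition probabilities telescopes to $\nu[n,n+1)$ (using $\nu(A)=\lim \tilde\mu_l(A)$ and the scaling in Lemma~\ref{Lemma:lambda}). Changing variables back and forth between the index sequences and using invariance of $Q$ under $\RightShift$, the Jacobian is precisely $\nu[n,n+1) = \nu[\tau(\nu),\tau(\nu)+1)$ on the set $\{\tau_-=n\}$ after the substitution, which is what we want.

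I expect the main obstacle to be the careful bookkeeping that identifies, for each value of $\tau_-$, exactly which finite block of new symbols is being prescribed and verifying that adaptedness gives the stated conditional probability for that block --- in particular making sure the conditioning $\sigma$-algebra $(\hat\mu_l,\hat i_l)_{l<m}$ is the right one and is genuinely invariant under the relevant translations, and handling the normalization constants from $N$ so that the telescoping product of $\mu$-masses really collapses to a single $\nu$-mass of a unit interval rather than leaving a stray scaling factor. Once the combinatorial identification is pinned down, verifying the derivative formula is a routine change-of-variables computation, and non-singularity of $T$ follows since $\deriv = \nu[\tau(\nu),\tau(\nu)+1) > 0$ on $\tilde Y$ by definition of $\tau$ (and symmetrically for $T^{-1}$), giving $T\Ptilde \sim \Ptilde$.
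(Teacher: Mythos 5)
Your proposal is correct and follows essentially the same route as the paper: the paper likewise decomposes the space into countably many pieces on which $T^{-1}$ coincides with a fixed translation (indexed by the group element $a \in G$ realizing it, rather than by the return time $n$), transfers everything to the chain side through $\theta$, and uses adaptedness to identify the Radon--Nikodym derivative as a ratio of conditional cylinder probabilities, namely $\frac{\phi_n \circ \theta^{-1} \circ T}{\phi_n \circ \theta^{-1}}$ with $\phi_n\circ\theta^{-1}(\nu,\minusSeq{i})=1/\nu(I_{n-1})$ and $\phi_n\circ\theta^{-1}\circ T(\nu,\minusSeq{i})=\nu[\tau(\nu),\tau(\nu)+1)/\nu(I_{n-1})$. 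The normalization factor you flag as a worry is exactly this common $1/\nu(I_{n-1})$, which cancels because the derivative is a ratio of two block probabilities; the paper makes the change of variables rigorous by comparing both $Q$ and $S_aQ$ to the $G_n$-invariant measure $Q_n=\sum_{b\in G_n}S_bQ$ and reusing the formula for $\phi_n=\frac{dQ}{dQ_n}$ from the conservativity proof, rather than by the direct disintegration you outline.
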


Note that $\phi$ only depends on the measure coordinate $\nu $. Since $\{T_n: \; n \in \ZZ\}$ is conservative, $T$ is also conservative, and we can use the Hurewicz Ergodic Theorem. To do so we will first calculate $U_T^n f$.

Define $\tau_n(\nu)$ recursively by $\tau_1=\tau $ and
$$\tau_n(\nu)=\tau_{n-1}(\nu)+\tau(t_{\tau_{n-1}(\nu)}^*\nu) $$
Note that if $k\in \NN$ and $k \leq \tau_n(\nu)$, then $\nu[k,k+1)>0$ if and only if there is a $ 1 \leq j \leq n$ such that $k=\tau_j(\nu)$. To keep notation uncluttered, we write $\tau_n$  instead of $\tau_n(\nu)$. Note that
$$\deriv \circ T^n(\nu,\minusSeq{i})=\deriv(t_{\tau_n}^{*}\nu)=t_{\tau_n}^{*}\nu[\tau(t_{\tau_n}^{*}\nu),\tau(t_{\tau_n}^{*}\nu)+1)=
\frac{\nu[\tau_{n+1},\tau_{n+1}+1)}{\nu[\tau_n,\tau_n+1)}$$
and so \eqref{eq:IteratedOperator} gives us
\begin{align*}
U_T^n f(\nu,\minusSeq{i})&=f \circ T^n(\nu,\minusSeq{i}) \cdot  \nu[\tau_1,\tau_1+1)\prod_{j=1}^{n-1}\frac{\nu[\tau_{j+1},\tau_{j+1}+1)}{\nu[\tau_j,\tau_j+1)}\\
&=f \circ T^n(\nu,\minusSeq{i}) \cdot \nu[\tau_n,\tau_n+1)
\end{align*}

We apply the Hurewicz Theorem with $p=1$. Note that
$$\sum_{k=0}^{n-1}U_T^k p(x)=\sum_{k=0}^{n-1} \nu[\tau_k,\tau_k+1)=\nu[0,\tau_{n-1}+1)$$
 so that according to the Hurewicz Theorem,  for a.e. $(\nu,\minusSeq{i})$,
$$\tilde{A}_n^f(\nu,\minusSeq{i}) \equiv \frac{1}{\nu[0,\tau_{n-1}+1)}\sum_{k=0}^{n-1}f \circ T^k(\nu,\minusSeq{i}) \cdot \nu[\tau_k, \tau_k+1) \rightarrow \EE_{\Ptilde}(f|\JJ)(\nu,\minusSeq{i})$$

Define
$$A_m^f(\nu,\minusSeq{i})=\frac{1}{\nu[0,m+1)} \sum_{j=0}^{m} f \circ T_j (\nu,\minusSeq{i}) \cdot \nu[j,j+1)$$
 which is well defined whenever $m \geq \tau_1$. If $\tau_n \leq m<\tau_{n+1}$ then $A_m^f=\tilde{A}_{n+1}^f$ and so for a.e. $(\nu,\minusSeq{i})$,
 $$A_m^f(\nu,\minusSeq{i})\rightarrow \EE_{\Ptilde}(f|\JJ)(\nu,\minusSeq{i}) $$
 Theorem~\ref{th:DiscreteErgodic} is just the special case in which $f(\nu,\minusSeq{i})=f(\nu)$.

 \paragraph{Continuous Ergodic theorem}  Let $f=f(\nu)$ be a bounded measurable function, for $a<b \in \RR$ define
 $$A_a^b(f)(\nu)=\frac{1}{\nu[a,b)}\int_a^b f(t_x^{*}\nu) d\nu(x) $$
We will show that $A_0^x$ converges a.e. (Theorem \ref{th:ContErgodic}). We define $F^f=A_0^1$ and note that it too is bounded and measurable. We also note that if $f=g$ $\Ptilde$ a.e., it is not necessarily true that $F^f=F^g$ $\Ptilde $ a.e. To see this we return to our example of a  non-deterministic ECPD from Subsection~\ref{sub:example}, and note that the set
$$B=\{\nu: \; \forall \epsilon>0, \; \nu[0,\epsilon), \; \nu(1-\epsilon,\epsilon)>0\}$$
 has full measure, but for every $x \in \RR \setminus \ZZ$, $B+x \cap B $ is a null set. Thus we can pick $f=1$ and $g=1_B$, and obtain $F^f=1$ and $F^g=0$.

Nonetheless, we can still apply the ergodic theorem we just proved to $F^f$, to obtain
$$A_0^N(f)(\nu)=A_N^{F^f}(\nu) \rightarrow \EE_{\Ptilde}(F^f|\JJ)(\nu) $$
for a.e. $\nu$. To extend this to any $x \in \RR$, note that if we take $f=p=1$ in the Chacon-Ornstein Lemma, we obtain
$$\lim_{n \rightarrow \infty} \frac{\nu[\tau_n,\tau_n+1)}{\nu[0,\tau_n+1)} \rightarrow 0 $$
and therefore
$$\lim_{x \rightarrow \infty} \frac{\nu[x,\lceil x \rceil)}{\nu[0,\lceil x \rceil)} \rightarrow 0 $$
using this and the fact that for any $0<x<N$,
$$A_0^N(f)(\nu)=\frac{\nu[0,x)}{\nu[0,N)}A_0^x(f)(\nu)+\frac{\nu[x,N)}{\nu[0,N)}A_x^N(f)(\nu)$$
it is not difficult to see that $A_0^x$ converges to the same limit as $A_0^N$ does.

\begin{appendices}

\section{Proofs for Section~\ref{sec:structures} }\label{App:one}

In this Section we give the proofs omitted in Section~\ref{sec:structures}. We begin with Subsection~\ref{sub:preliminaries} in which we present some facts which will be very useful for most of the proofs of Section~\ref{sec:structures}. We then present the proofs themselves in Subsection~\ref{sub:proofs}.

\subsection{Preliminaries} \label{sub:preliminaries}

\emph{Properties of $N$}
In Subsection~\ref{sub:extended} we defined
for every $\nu \in \MM(\RR) \setminus \{0\}$
$$\psi(\nu)=\min\{n\in \NN:\nu[-(n-1),n)>0\}$$
and then defined a normalization map on $\MM(\RR)$ by
$$N \nu=\twopartdef{\frac{\nu}{\nu[-(\psi(\nu)-1),\psi(\nu))}}{\nu \neq 0}{0}{\nu=0}$$
This map has the following properties
\begin{remark} \label{rem:Norm}
\begin{enumerate}
\item For every $\mu$, there is a $\lambda(\mu)>0$ such that $N \mu = \lambda \mu$. Moreover, if
$$\mu_1 \big |_{\left[-\left(\psi(\mu_1)-1\right),\psi(\mu_1)\right)}= \mu_2 \big |_{\left[-\left(\psi(\mu_1)-1\right),\psi(\mu_1)\right)}$$
 then $\lambda(\mu_1)=\lambda(\mu_2) $.
\item For every  $ \lambda>0$, $N(\lambda\mu)=N\mu$. Moreover, if $\rho: \RR \rightarrow \RR$ is a measurable function then (for $\lambda=\lambda(\mu)$)
$$N\rho N \mu=N\rho \lambda \mu=N \lambda \rho \mu=N\rho \mu$$
\end{enumerate}
\end{remark}

In the following we will discuss sequences of measures $(\mu_n)_{n>0}$, with the property that there is an increasing sequence of intervals of the form $I_n=[a_n,b_n)$ with $a_n,b_n \in \ZZ$ such that $\mu_n$ is supported in $I_n$ (in fact we will discuss sequences with non-positive indexes, i.e. $(\mu_n)_{n \leq 0}$, $(I_n)_{n \leq 0}$ etc. but of course this makes no difference). The following Lemma will be useful
\begin{lemma}\label{lem:IncreasingInt}
In the setup described above,

 If for every $n$,  there is an $\eta_n>0$ such that $\eta_n \mu_n \big |_{I_n}=\mu_{n+1} \big |_{I_n}$, then for all large enough $n$, $N \mu_n \big |_{I_n}=N\mu_{n+1}\big |_{I_n} $.
 \end{lemma}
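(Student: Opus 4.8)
The strategy is to control the normalizing constants. For $\mu_n\neq 0$ Remark~\ref{rem:Norm}(1) gives $N\mu_n=\lambda(\mu_n)\,\mu_n$ with $\lambda(\mu_n)=c_n^{-1}$, where $c_n=\mu_n[-(\psi(\mu_n)-1),\psi(\mu_n))$; since $\mu_n$ is supported in $I_n$ we also have $\mu_n\restrict_{I_n}=\mu_n$ and $N\mu_n\restrict_{I_n}=N\mu_n$. So the claim will follow once I show that for all large $n$ one has $\mu_{n+1}\neq 0$ and
$$c_{n+1}=\eta_n\,c_n,$$
because then $N\mu_{n+1}\restrict_{I_n}=c_{n+1}^{-1}(\mu_{n+1}\restrict_{I_n})=(\eta_n c_n)^{-1}\eta_n\mu_n=c_n^{-1}\mu_n=N\mu_n\restrict_{I_n}$. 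First I would dispose of the trivial case $\mu_n=0$ for all $n$, and otherwise fix $m_0$ with $\mu_{m_0}\neq 0$: the hypothesis, read as $\mu_{n+1}\restrict_{I_n}=\eta_n\mu_n$ (using $\mu_n\restrict_{I_n}=\mu_n$ and $\eta_n>0$), propagates non-vanishing forward, so $\mu_n\neq 0$ for all $n\geq m_0$.

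The main tool is the iterated identity obtained by restricting the hypothesis to the smaller integer-endpoint interval $I_m\subseteq I_{n-1}$ at each step: for $n\geq m\geq m_0$,
$$\mu_n\restrict_{I_m}=\Big(\prod_{j=m}^{n-1}\eta_j\Big)\mu_m=:E_{m,n}\,\mu_m,\qquad E_{m,n}>0.$$
From this I would deduce that $\psi(\mu_n)$ is eventually constant. Indeed, for any fixed $m\geq m_0$, writing $\psi_m=\psi(\mu_m)$ we have $\mu_m[-(\psi_m-1),\psi_m)>0$, hence $\mu_n[-(\psi_m-1),\psi_m)\geq E_{m,n}\,\mu_m[-(\psi_m-1),\psi_m)>0$ for all $n\geq m$, so $\psi(\mu_n)\leq\psi_m$. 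Therefore $\psi^\ast:=\min_{n\geq m_0}\psi(\mu_n)$ is attained, say at $n_1$, and repeating the estimate from $n_1$ shows $\psi(\mu_n)=\psi^\ast$ for all $n\geq n_1$. Put $J=[-(\psi^\ast-1),\psi^\ast)$, a fixed bounded interval, and $c_n=\mu_n(J)$ for $n\geq n_1$.

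It then remains to check that for large $n$ the ``growth region'' $I_{n+1}\setminus I_n$ is disjoint from $J$. Writing $I_n=[a_n,b_n)$, the integer sequences $a_n$ (non-increasing) and $b_n$ (non-decreasing) each either stabilize or diverge to $\mp\infty$; in every case $I_{n+1}\setminus I_n=[a_{n+1},a_n)\cup[b_n,b_{n+1})$ misses the fixed bounded set $J$ for all large $n$. For such $n$, using that $\mu_{n+1}$ is supported in $I_{n+1}$,
$$c_{n+1}=\mu_{n+1}(J)=\mu_{n+1}(J\cap I_n)=\big(\mu_{n+1}\restrict_{I_n}\big)(J)=\eta_n\mu_n(J)=\eta_n c_n,$$
which is exactly what was needed. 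The step I expect to require the most care is the stabilization of $\psi(\mu_n)$ together with the observation that $J$ eventually lies away from $I_{n+1}\setminus I_n$ (in particular, handling the case where $\bigcup_n I_n$ is a proper subset of $\RR$); both reduce, as above, to the iterated identity and to the elementary fact that a monotone sequence of integers is eventually constant or diverges.
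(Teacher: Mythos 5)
Your proof is correct, and while it follows the same overall strategy as the paper --- reduce the claim to the behaviour of the normalizing constants, using that the hypothesis iterates to $\mu_n\restrict_{I_m}=\bigl(\prod_{j=m}^{n-1}\eta_j\bigr)\mu_m$ --- the decisive step is handled by a genuinely different argument. The paper fixes $n_0$ with $\mu_{n_0}\neq 0$, sets $m=\psi(\mu_{n_0})$, asserts that $[-m+1,m)\subseteq I_{n_1}$ for some large $n_1$, and then lets Remark~\ref{rem:Norm}(1) do the bookkeeping: once the normalization window lies inside $I_n$ and $\eta_n\mu_n$, $\mu_{n+1}$ agree there, $\lambda(\eta_n\mu_n)=\lambda(\mu_{n+1})$ and the conclusion is immediate. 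That containment holds in the intended application (where $\bigcup_n I_n=\RR$, e.g.\ for $\minusSeq{i}\notin E_0^-\cup E_{p-1}^-$), but it is not a consequence of the stated setup. You instead prove that $\psi(\mu_n)$ is eventually constant, equal to $\psi^*$ (via the monotonicity $\psi(\mu_n)\leq\psi(\mu_m)$ for $n\geq m$, which the paper also uses implicitly), and then show $c_{n+1}=\eta_n c_n$ by observing that the increments $I_{n+1}\setminus I_n$ eventually miss the fixed window $J=[-(\psi^*-1),\psi^*)$; this never requires $J\subseteq I_n$, so it also covers the case $\bigcup_n I_n\subsetneq\RR$. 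What your route buys is robustness: it proves the lemma exactly under its stated hypotheses and quietly patches the paper's ``there is now a large enough $n_1$'' step. What the paper's route buys is brevity, since Remark~\ref{rem:Norm} replaces your explicit stabilization-of-$\psi$ and disjointness arguments with a one-line comparison of normalizing constants.
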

 \begin{proof}
   If for every $n$, $\mu_n=0$, then the claim is trivial. Otherwise, there is an  $n_0 \in \NN$ such that  $\mu_{n_0}(I_{n_0})>0$. Denote $m=\psi(\mu_{n_0}) $.  There is now a large enough $n_1 $ such that
   \begin{equation}
   \label{eq:cover}[-m+1,m)  \subseteq I_{n_1}
   \end{equation}
   For all $n\geq n_1$ It follows from Remark~\ref{rem:Norm} that $\lambda(\eta_n \mu_{n})=\lambda(\mu_{n+1})$ and
   $$N(\mu_{n})=N(\eta_n \mu_{n})=\lambda(\eta_n\mu_n) \mu_{n} $$
   and therefore
   $$N\mu_{n}\restrict_{I_n}=\lambda(\eta_n\mu_n)\mu_n\restrict_{I_n}=\lambda(\mu_{n+1})\mu_{n+1}\restrict_{I_n}=N\mu_{n+1}\restrict_{I_n} $$

 \end{proof}

The following will enable us to interchange limits;
 \begin{remark}[Interchanging limits] \label{InterchangingLimits}
\begin{enumerate}
\item If for all large enough $n$, $ \mu_n \big |_{I_n}=\mu_{n+1}\big |_{I_n} $, then the sequence $\lambda(\mu_n) $ is constant for all large enough $n$, from similar considerations to those used in the previous proof. It follows that for every $A$, $ N\mu_n(A) $ is an eventually increasing sequence, and thus the measure $\lim N\mu_n$ is well defined and
    $$\lim N \mu_n=N \lim \mu_n$$
\item $\mu_n $ converges to $\lim \mu_n $ in the weak topology, and therefore, If $\rho$ is an invertible continuous function whose inverse is also continuous, then $\lim \rho \mu_n=\rho \lim \mu_n $.
\end{enumerate}
\end{remark}

\textbf{Properties of $\HH$. }
We recall that we denoted the set of orientation preserving homotheties by $\HH$, and the intervals of the form $[a,b)$ by $\II$, and defined $\rho_I^J $ to be the unique orientation preserving homothety which takes the interval $I \in \II$ to the interval $J \in \II$.

For any $m\in \ZZ \cup \{\infty\}$,  $(I_n)_{n<m}$  is compatible with $(i_n)_{n <m} $ if for every $n<m$, $I_n=I_{n-1}^{i_n} $. If $I_0=[0,1) $ we say that the sequence is well based.  We record the following properties of $\HH$:
\begin{enumerate}

\item For $I,J,K \in \II$, $\rho_J^K\rho_I^J=\rho_I^K$, since these are both homotheties in $\HH$ which take $I$ to $K$, and therefore uniqueness implies that they are equal.
\item \label{it:PosPreserv} for $\rho \in \HH$, $I \in \II$ and $i \in \Lambdap$, $\rho(I^i)=(\rho(I))^i$. It follows that $\rho_I^J=\rho_{I^i}^{J^i} $, and that for $m \in \ZZ \cup \{\infty \} $, if $(I_n)_{n<m} $ is compatible with $(i_n)_{n<m}$, then so is $(\rho I_n)_{n<m}$.
\end{enumerate}

\subsection{Proofs}\label{sub:proofs}
We now present the proofs of the Lemmas from Section~\ref{sec:structures}.  The proofs will use the observations presented in the previous Subsection extensively. We will not give a reference each time one of these observations is used.

Recall that we defined for every $(\fullSeq{\mu},\fullSeq{i}) \in LS $ measures $\tilde{\mu}_n$ obtained by choosing the sequence of intervals $\minusSeq{I}$ which is well ordered and compatible with $\minusSeq{i}$, and defining $\tilde{\mu}_n=N\rho_{I_0}^{I_n}\mu_n $.

\Lemlamb*

%

\begin{proof}
 Note that since $\mu_n=\mu_{n-1}^{i_n} $, $\mu_n$ and $\rho_{i_n}\mu_{n-1}=\rho_{I_0^{i_n}}^{I_0} \mu_{n-1} $ agree on $I_0=[0,1) $ up to a multiplicative constant, and therefore if we pushforward both measures by $\rho_{I_0}^{I_n} $, they will agree on $I_n$ up to a multiplicative constant. Note that $\rho_{I_0}^{I_n}\mu_n $ is, up to normalization, equal to $\tilde{\mu}_n $, and since
 $$\rho_{I_0}^{I_n}\rho_{I_0^{i_n}}^{I_0}=\rho_{I_0^{i_n}}^{I_n}=\rho_{I_0^{i_n}}^{I_{n-1}^{i_n}}=\rho_{I_0}^{I_{n-1}} $$
 it follows that $\rho_{I_0}^{I_n} \rho_{I_0^{i_n}}^{I_0} \mu_{n-1}=\rho_{I_0}^{I_{n-1}}\mu_{n-1} $, which is equal to $\tilde{\mu}_{n-1} $ up to normalization. Thus we proved that $\tilde{\mu}_{n-1} $ and $\tilde{\mu}_n $ agree on $I_n$ up to a multiplicative constant.

 The fact that for negative enough $n$, $\lambda(n)=1 $, follows from Lemma~\ref{lem:IncreasingInt}.
\end{proof}

The last Lemma enabled us to define a measure $\nu=\nu(\minusSeq{\mu},\minusSeq{i}) $ by $\nu(A)=\lim_{n \rightarrow -\infty}\tilde{\mu}_n(A) $
and $\Phi:\PlainXZ \to \PlainXZtilde $ by
$$\Phi(\fullSeq{\mu},\fullSeq{i})=(\nu(\minusSeq{\mu},\minusSeq{i}),\fullSeq{i}) $$
We then claimed
\isomorphism*
 Recall that we defined $\Mext $ on $\PlainXZtilde $ by
 $$\Mext(\nu,\fullSeq{i})=(N\rho_{i_1}\nu,\Shift(\fullSeq{i})) $$
Note that  $\rho_{i_1}=\rho_{I_1}^{I_0} $, where $I_0,I_1 $ are members of the sequence of intervals $\minusSeq{I} $ which is  well based and compatible with $\minusSeq{i} $.
 \begin{proof}
\textbf{$\Phi$ is a factor map.} For every $(\fullSeq{\mu},\fullSeq{i}) \in LS$,
$$\Phi \circ \Shift (\fullSeq{\mu},\fullSeq{i})=\Phi((\mu_{n+1},i_{n+1})_{n \in \ZZ})=(\nu((\mu_{n+1},i_{n+1})_{n \leq 0}),(i_{n+1})_{n \in \ZZ}) $$
while
\begin{align*}
\Mext \circ \Phi (\fullSeq{\mu},\fullSeq{i})&=\Mext(\nu(\minusSeq{\mu},\minusSeq{i}),\fullSeq{i})\\
&=(N\rho_{i_1} \nu((\mu_n,i_n)_{n \leq 0}),(i_{n+1})_{n \in \ZZ}))
\end{align*}
therefore, we need to prove that
$$\nu((\mu_{n+1},i_{n+1})_{n \leq 0})=N\rho_{i_1} \nu((\mu_n,i_n)_{n \leq 0}) $$

Let $(I_n)_{n \leq 1} $ be the sequence of intervals which is well based and compatible with $(i_n)_{n \leq 1}$. It follows that $(I_{n+1})_{n \leq 0} $ is still compatible with $(i_{n+1})_{n \leq 0}$. As homotheties preserve compatibility, $(J_n)_{n \leq 0}=(\rho_{I_1}^{I_0} I_{n+1})_{n\leq 0}$ is also compatible with $(i_{n+1})_{n \leq 0}$, and it is also well based since $\rho_{I_1}^{I_0}I_1=I_0$.
Therefore,
\begin{equation} \label{eq:helper}
\nu((\mu_{n+1},i_{n+1})_{n \leq 0})=\lim_{n \rightarrow -\infty}N \rho_{J_0}^{J_n}\mu_{n+1}
\end{equation}
Now, as $J_0=[0,1)=I_0 $, and the homothety in $\HH$ which takes $I_{n+1} $ to $J_n=\rho_{I_1}^{I_0}I_{n+1} $ is just $\rho_{I_1}^{I_0}$ by definition,
$$\rho_{J_0}^{J_n}=\rho_{I_0}^{J_n}=\rho_{I_{n+1}}^{J_n}\rho_{I_0}^{I_{n+1}}=\rho_{I_1}^{I_0}\rho_{I_0}^{I_{n+1}}$$
 and so, returning to Eq~\ref{eq:helper} we obtain
 \begin{align*}
 \nu((\mu_{n+1},i_{n+1})_{n<0})&=\lim_{n \rightarrow -\infty}N \rho_{I_1}^{I_0}\rho_{I_0}^{I_{n+1}} \mu_{n+1}=  \lim_{n \rightarrow -\infty}N \rho_{I_1}^{I_0}N\rho_{I_0}^{I_{n+1}} \mu_{n+1}\\
 &=\lim_{n\rightarrow -\infty} N \rho_{I_1}^{I_0} \tilde{\mu}_{n+1}=N\lim_{n \rightarrow -\infty}\rho_{I_1}^{I_0} \tilde{\mu}_{n+1}\\
 &=N\rho_{I_1}^{I_0}\lim_{n \rightarrow -\infty}\tilde{\mu}_{n+1}=N\rho_{i_1}\nu((\mu_n,i_n)_{n \leq 0})
\end{align*}

\textbf{$\Phi$ is a bijection.} An explicit formula for the inverse of $\Phi$ is given by
$$\Phi^{-1}(\nu,\fullSeq{i})=(NR\rho_{I_n}^{I_0}\nu\restrict_{I_n},i_n)_{n \in \ZZ} $$
where $\minusSeq{I}$ is the sequence which is well based and compatible with $\minusSeq{i} $. Thus $\Phi $ is a bijection.


\end{proof}

Recall that for $k \in \ZZ$, we defined $T_k:\tilde{X} \to \tilde{X} $ by
$$T_k(\nu,\minusSeq{i})=(Nt_k\nu,s_k(\minusSeq{i})) $$
where $s_k(\minusSeq{i})$ is the unique sequence which agrees with $\minusSeq{i}$ on all but a finite number of coordinates with the following property:  The sequences of intervals $\minusSeq{I} $ and $\minusSeq{J}$ which are well based and compatible with $\minusSeq{i}$ and $s_k(\minusSeq{i}) $ respectively, satisfy $J_n=I_n-k$  for all negative enough $n$.
\action*
\begin{proof}
We need to prove that for all $l,k \in \ZZ $, $Nt_l  Nt_k=Nt_{l+k} $ and $s_l  s_k=s_{l + k}$.
Since the translation maps $t_k$ are members of
 $\HH$, it follows that for every $\nu\in \NMM $,
$$Nt_lNt_k\nu=Nt_l t_k \nu=Nt_{l+k}\nu$$
If $\minusSeq{I}$, $\minusSeq{J}$ and  $\minusSeq{F} $ are well based and compatible with $\minusSeq{i} $, $s_k(\minusSeq{i})$ and $s_l(s_k(\minusSeq{i})) $ respectively, then for all negative enough $n$, $J_n=I_n-k $ and $F_n=J_n -l $ and therefore  $F_n=I_n-(l+k)$.  Uniqueness now implies that
$$s_{l+k}(\minusSeq{i})=s_l(s_k(\minusSeq{i})) $$
\end{proof}

Recall that we defined a group
$$G=\{\minusSeq{i}: \; i_n=0 \text{ for all negative enough } n \}$$
 and we defined  `translation maps' $\{ S_a \}_{a \in G}$ on $X$ by
$S_a(\minusSeq{\mu},\minusSeq{i})=(\minusSeq{\eta},\minusSeq{j})$
where $\minusSeq{j}=\minusSeq{i}+a$ and $\minusSeq{\eta} $ is the unique sequence of measures with $\mu_n=\eta_n $ for all negative enough $n$, which additionally satisfies $(\minusSeq{\eta},\minusSeq{j}) \in X$.

\relation*

\begin{proof}
\begin{enumerate}
\item For  given $k \in \ZZ $ and  $(\minusSeq{\mu},\minusSeq{i}) $, as $\minusSeq{i}$ and $s_k(\minusSeq{i}) $ disagree on a finite number of coordinates,  there is some $a \in G $ such that $s_k(\minusSeq{i})=\minusSeq{i}+a$. We claim that for this $a$, equality holds in the measure coordinate as well, i.e.
$$\theta S_a(\minusSeq{\mu},\minusSeq{i})=T_k\theta(\minusSeq{\mu},\minusSeq{i})=(Nt_k\nu(\minusSeq{\mu},\minusSeq{i}),s_k(\minusSeq{i})) $$
Let $\minusSeq{I} $ and $\minusSeq{J}$ be the sequences of intervals which are well based and compatible with $\minusSeq{i}$ and $s_k(\minusSeq{i})=\minusSeq{i}+a $, then
\begin{align*}
Nt_k\nu(\minusSeq{\mu},\minusSeq{i})&=Nt_k(\lim_{n \rightarrow - \infty} N \rho_{I_0}^{I_n}\mu_n)=\lim_{n \rightarrow - \infty} Nt_kN\rho_{I_0}^{I_n}\mu_n\\
&=\lim_{n \rightarrow - \infty} Nt_k\rho_{I_0}^{I_n}\mu_n=\lim_{n \rightarrow - \infty}N\rho_{I_0}^{t_kI_n}\mu_n=^{(*)} \nu(S_a(\minusSeq{\mu},\minusSeq{i})) \end{align*}
where $(*)$ follows from the fact that for all negative enough $n$, $J_n=t_k I_n $ and the $n$-th measure coordinate of $S_a(\minusSeq{\mu},\minusSeq{i}) $ is just $\mu_n $, and so
$\hat{\mu}_n\left(S_a (\minusSeq{\mu},\minusSeq{i})\right)=N\rho_{I_0}^{t_kI_n}\mu_n$ .
\item For given $a \in G $ and $(\minusSeq{\mu},\minusSeq{i}) \in X $, it is sufficient to find a $k \in \ZZ$ such that $s_k(\minusSeq{i})=\minusSeq{i}+a$ since by the proof of the former claim, there is a $b \in G$ such that $\theta S_b(\minusSeq{\mu},\minusSeq{i})=T_k \theta(\minusSeq{\mu},\minusSeq{i}) $ and in particular $\minusSeq{i}+a=\minusSeq{i}+b$ which implies $a=b$.

To find such a $k \in \ZZ$, let $\minusSeq{I} $ be the sequence well based and compatible with $\minusSeq{i} $, and define
$$\minusSeq{j}=\minusSeq{i}+a $$
 There is some $n\leq 0$ such that for all $m<n $  , $i_m=j_m$. Define a sequence $\minusSeq{J}$, by requiring that for all $m<n $, $J_m=I_m$, and by recursively requiring that for all $m \geq n$, $$J_m=J_{m-1}^{j_m} $$
   $\minusSeq{J}$ is compatible with $\minusSeq{j}$ but not well based. However, there is a $k \in \ZZ $ such that $t_kJ_0=I_0$, and as homotheties preserve compatibility, $(t_kJ_l)_{l \leq 0}$ is well based and compatible with $\minusSeq{j} $. Since for all negative enough $l$, $t_kJ_l=t_kI_l$, $\minusSeq{j}$ fulfills the property that determines $s_k(\minusSeq{i}) $ uniquely, and therefore
$$s_k(\minusSeq{i})=\minusSeq{j}=\minusSeq{i}+a $$
\item This just follows from the fact that $\minusSeq{i}+a=\minusSeq{i}$ if and only if $a=0$, and similarly $s_k(\minusSeq{i})=\minusSeq{i} $ if and only if $k =0$.
\end{enumerate}
\end{proof}

The following can now be easily shown
\conservativityequivalence*
\begin{proof}
Assume that the action of $\ZZ$ on $\tilde{X}$ is conservative with respect to $\theta Q$. Let $A \subseteq X$ be a Borel set, with $Q(A)>0$, we want to prove that $Q(A \cap (\cup_{a \in G \setminus \{ 0\}}S_aA))>0 $.
$$Q(A \cap (\cup_{a \in G \setminus \{ 0\}}S_aA))=\theta Q(\theta A \cap(\cup_{a \in G \setminus \{ 0\}} \theta S_a A))=\theta Q(\theta A \cap(\cup_{k \in \ZZ \setminus \{ 0\}} T_k \theta A))>0 $$
Where $\theta A \cap(\cup_{k \in \ZZ \setminus \{ 0\}} T_k \theta A)$ has positive measure since we assumed the action of $\ZZ$ on $\tilde{X}$ is conservative with respect to $\theta{Q} $. The other direction can be prove in exactly the same way.
\end{proof}
\factorconservativity*

\begin{proof}
If $A \subseteq \NMM $ is a Borel set with $\tilde{\pi}_{\MM}\tilde{Q}(A)>0$, then
$$\tilde{Q}(\{(\nu,\minusSeq{i}) \in \tilde{X}: \; \nu \in A \})=\tilde{\pi}_{\MM}\tilde{Q}(A)>0$$
and therefore the conservativity in the assumption implies that there is some $k \neq 0 $ such that
\begin{align*}
0<\tilde{Q}(\{(\nu,\minusSeq{i}) \in \tilde{X}: \; \nu \in A \}& \cap T_k \{(\nu,\minusSeq{i}) \in \tilde{X}: \; \nu \in A \})
=\tilde{\pi}_{\MM}\tilde{Q}(A\cap t_k^*A)
\end{align*}
\end{proof}

\section{Proof of Lemma~\ref{lem:NonSing}} \label{l:non_sing}

         Recall that we denote the pushforward of a measure $P $ by $f$  by $fP$ or $dfP$, and the multiplication of $P$ by $f$ by $fdP$.

        It is sufficient to show that
        $$ \frac{dT^{-1}\Ptilde}{d\Ptilde}(\nu,\minusSeq{i})=\nu[\tau(\nu),\tau(\nu)+1)$$
        and then since $\frac{dT^{-1}\Ptilde}{d\Ptilde}>0 $ it follows that $T^{-1}\Ptilde\sim \Ptilde$.

        As in the proof of conservativity, in our calculation of $\frac{dT^{-1}\Ptilde}{d\Ptilde}$ we use the `translation maps' $\{S_a\}_{a \in G}$ on $X$. This can be done since the set of full measure $\tilde{Y}$ can be divided into a countable number of disjoint sets
        $$\tilde{Y}_a=\{ (\nu,\minusSeq{i}): \; T^{-1}(\nu,\minusSeq{i})=\theta \circ S_a \circ \theta^{-1}(\nu,\minusSeq{i}) \}$$

        Fix some  $a \in G$, and pick an $n\leq 0$ so that $a \in G_n $. We recall   that, for $Q_n=\sum_{b \in G_n} S_bQ $ and $\phi_n=\frac{dQ}{dQ_n} $, $\phi_n$ is given by
        \begin{align*}
        \phi_n(\minusSeq{\mu},\minusSeq{i})&=^{\text{Eq~\ref{eq:phin}}}\PP_{\Pminus}(\hat{i}_n,\ldots,\hat{i}_0=i_n,\ldots,i_0|(\hat{\mu}_l,\hat{i}_l)_{l <n})(\minusSeq{\mu},\minusSeq{i})\\
        &=^{\text{Eq~\ref{eq:superAdap}}}\mu_{n-1}[i_n,i_{n+1},\ldots,i_0)_{p^{|n|+1}}
        \end{align*}
        %

        Since $Q_n$ is $S_a$ invariant,
        $$\frac{dS_aQ}{dQ_n}=\phi_n \circ S_{-a}$$
        Additionally, note that for all $(\minusSeq{\mu},\minusSeq{i}) \in Y$, $\phi_n(\minusSeq{\mu},\minusSeq{i})>0$. Therefore
        $$1_YdS_aQ \ll 1_Y dQ_n \sim 1_Y dQ =dQ$$
        and so
        \begin{equation}\label{eq:deriv}
        1_YdS_aQ=\frac{dS_aQ}{dQ_n}\left(\frac{dQ}{dQ_n}\right)^{-1} 1_Y dQ=\frac{\phi_n \circ S_{-a}}{\phi_n} dQ
        \end{equation}

        This can be used to show that for every $n \leq 0$ and $a
        \in G_n$,
        \begin{equation}\label{eq:Tderiv}
        1_{T^{-1}\tilde{Y}_a}dT^{-1} \tilde{Q}= 1_{T^{-1}\tilde{Y}_a}
        \frac{\phi_n\circ \theta^{-1} \circ T}{\phi_n \circ \theta^{-1}}d\tilde{Q}
        \end{equation}
        We leave this computation to the end of the proof. We now obtain $\frac{dT^{-1}\tilde{Q}}{d\tilde{Q}}$ through
        \begin{align*}
        dT^{-1}\tilde{Q}&=\sum_{a \in G}1_{T^{-1}\tilde{Y}_a}dT^{-1} \tilde{Q}=
        \sum_{a \in G} 1_{T^{-1}\tilde{Y}_a}\frac{\phi_n\circ \theta^{-1} \circ T}{\phi_n \circ \theta^{-1}}d\tilde{Q}\\
        &=\frac{\phi_n\circ \theta^{-1} \circ T}{\phi_n \circ \theta^{-1}}d\tilde{Q}
        \end{align*}

        We now compute $\frac{\phi_n\circ \theta^{-1} \circ T}{\phi_n \circ \theta^{-1}}$. Let $(\nu,\minusSeq{i})= \theta(\minusSeq{\mu},\minusSeq{i}) $ be some point in $Y$. and let $\minusSeq{I}$ be the sequence of intervals which is well based and compatible with $\minusSeq{i}$. Then
        $$\phi_n \circ \theta^{-1}(\nu,\minusSeq{i})=\mu_{n-1}[i_n,\ldots,i_0)_{p^{|n|+1}}=\frac{\nu[0,1)}{\nu(I_{n-1})}=\frac{1}{\nu(I_{n-1})} $$
        We now apply the equation above to $T(\nu,\minusSeq{i})$ instead of $(\nu,\minusSeq{i}) $. Since for $T(\nu,\minusSeq{i})$ the compatible sequence $\minusSeq{J}$ satisfies $J_{n-1}=I_{n-1}-\tau $ (for $\tau=\tau(\nu) $) we obtain
        $$ \phi \circ \theta^{-1} \circ T(\nu,\minusSeq{i})=\frac{t^*_{\tau}\nu[0,1)}{t^*_{\tau}\nu(I_{n-1}-\tau)}=\frac{\nu[\tau,\tau+1)}{\nu(I_{n-1})}$$
        Which shows that indeed $\frac{dT^{-1}\Ptilde}{d\Ptilde}(\nu,\minusSeq{i})=\nu[\tau(\nu),\tau(\nu)+1)$.

        We now go back to proving Equation~\ref{eq:Tderiv}. For every $f \in L^1(\tilde{Q})$ we have

        \begin{align*}
        \int 1_{T^{-1}\tilde{Y}_a} \cdot fdT^{-1}\tilde{Q}&=\int 1_{\tilde{Y}_{\alpha}} \cdot (f \circ T^{-1}) d \tilde{Q}\\
        &=\int 1_{\tilde{Y}_a}\cdot (f \circ \theta \circ S_a \circ\theta^{-1}) d\theta Q=
        \int (1_{\tilde{Y}_a}\circ \theta)\cdot(f \circ \theta \circ S_a)dQ\\
        &=\int (1_{\tilde{Y}_a}\circ \theta \circ S_{-a}) \cdot(f \circ \theta )dS_aQ=^{(*)}
        \int (1_{\tilde{Y}_a}\circ \theta \circ S_{-a})\cdot(f \circ \theta ) \cdot 1_Y dS_aQ\\
        &=^{\eqref{eq:deriv}}\int (1_{\tilde{Y}_a}\circ \theta \circ S_{-a})\cdot(f \circ \theta )\cdot (\frac{\phi_n \circ S_{-a}}{\phi_n}) dQ \\
        &=\int (1_{\tilde{Y}_a}\circ \theta \circ S_{-a}\circ \theta^{-1})\cdot f \cdot  (\frac{\phi_n \circ S_{-a}\circ \theta^{-1}}{\phi_n\circ \theta^{-1}}) d\tilde{Q}\\
        &=^{(**)}\int 1_{T^{-1}\tilde{Y}_a} \cdot f \cdot (\frac{\phi_n \circ \theta^{-1} \circ T}{\phi_n\circ \theta^{-1}}) d \tilde{Q}
        \end{align*}
        Equality $(*)$ follows from the fact that
        \begin{equation}\label{eq:shortcut}
        1_{\tilde{Y}_a} \circ \theta \circ  S_{-a}=1_{S_a  \theta^{-1} \tilde{Y}_a}=1_{\theta^{-1}  T^{-1} \tilde{Y}_a}
        \end{equation}
        and
        $$\theta^{-1}T^{-1}\tilde{Y}_a\subseteq \theta^{-1}T^{-1}\tilde{Y} \subseteq \theta^{-1}\tilde{Y} \subseteq Y  $$
        and therefore
        $$(1_{\tilde{Y}_a}\circ \theta \circ S_{-a}) \cdot 1_Y=1_{\tilde{Y}_a}\circ \theta \circ S_{-a} $$

        Equality $(**)$ follows from the fact that
        $$1_{\tilde{Y}_a}\circ \theta \circ S_{-a}\circ \theta^{-1}=^{\text{\eqref{eq:shortcut}}}1_{\theta^{-1} T^{-1} \tilde{Y}_a} \circ \theta^{-1}=1_{ T^{-1} \tilde{Y}_a} $$
        and from the fact that if $(\nu,\minusSeq{i})\in T^{-1}\tilde{Y}_a $ then
        $$\theta^{-1}\circ T(\nu,\minusSeq{i})=S_{-a} \circ \theta^{-1}(\nu,\minusSeq{i}) $$

\section{Notation}\label{Ap:Notation}
We have attempted to use notation and definitions similar to \cite{hochman}, but have made certain changes which are more convenient in our context. The main differences in definitions being:
\begin{enumerate}
\item Our `basic interval' is the unit interval $[0,1)$ and not $(-1,1) $.
\item We allowed the space of normalized measures $\NMM$ and the space of probability measures on the basic interval, to include measure with $\mu[0,1)=0 $. We therefore also extended the definition of the normalization map so that it is well defined for such measures. The reason for this is that the set $\{\mu: \; \mu[0,1)>0 \} $ isn't closed under translations, which is our subject here.
\item We define Extended ECPS in a semi-symbolic manner, replacing the space $\NMM \times (-1,1) $ from \cite{hochman} with  (a subset of) $\NMM \times \Lambdap^{\ZZ}  $.
\end{enumerate}
The following table compares our notation with the notation of \cite{hochman}:

\begin{longtable} {p{80pt} p{80pt} p{200pt} }
        This Paper & \cite{hochman} & remarks \\ \hline
         & & \\
        $N$ or ${\Box}^ *$ & ${\Box}^*$         &   Map normalizing  measures on $\RR $.        \\
        $\NMM$ & $\NMM$         & Space of normalized measures.         \\
        $\PP_0[0,1)$ & $\MM^{\Box}$ & Space of probability measures on the `basic interval'. \\
        $R$ & ${\Box}^{\Box}$ & Restriction of a measure to  the `basic interval'.  \\
        $\PlainXZtilde$ & $\NMM \times (-1,1)$ & Spaces on which Extended ECPS are defined. \\
        $\Mext $ & $M_b$ & zooming in maps on Extended ECPS. \\
        $t_x$ & $T_x$ & Translation by $x$ on $\RR $, or the induced translation on $\MM(\RR) $. \\
        $[i_1,\ldots,i_n \bpn$ & $\DD_{p^n}(x)$ & (for appropriate $x$) the interval $[\sum_{j=1}^n i_j p^{-j},\sum_{j=1}^n i_j p^{-j}+p^{-n}) $. \\
\end{longtable} 

\end{appendices}

\end{document}